\patchcmd{\NAT@test}{\else \NAT@nm }{\else \NAT@nmfmt{\NAT@nm}}{}{}
\theoremstyle{plain}
\newtheorem{theorem}{Theorem}[section]
\newtheorem{proposition}[theorem]{Proposition}
\newtheorem{lemma}[theorem]{Lemma}
\newtheorem{corollary}[theorem]{Corollary}
\theoremstyle{definition}
\newtheorem{definition}[theorem]{Definition}
\newtheorem{example}[theorem]{Example}
\theoremstyle{remark}
\newtheorem{remark}[theorem]{Remark}
\numberwithin{equation}{section}
\begin{document}

\title{Quadratic projectable Runge--Kutta methods}

\author{Ari Stern}
\address{Washington University in St.~Louis}
\email{stern@wustl.edu}

\author{Milo Viviani}
\address{Scuola Normale Superiore di Pisa}
\email{milo.viviani@sns.it}

\subjclass[2020]{Primary 37M15}

\begin{abstract}
  Runge--Kutta methods are affine equivariant: applying a method before or after an affine change of variables yields the same numerical trajectory. However, for some applications, one would like to perform numerical integration after a \emph{quadratic} change of variables. For example, in Lie--Poisson reduction, a quadratic transformation reduces the number of variables in a Hamiltonian system, yielding a more efficient representation of the dynamics. Unfortunately, directly applying a symplectic Runge--Kutta method to the reduced system generally does not preserve its Hamiltonian structure, so many proposed techniques require computing numerical trajectories of the original, unreduced system.

  In this paper, we study when a Runge--Kutta method in the original variables descends to a numerical integrator expressible entirely in terms of the quadratically transformed variables. In particular, we show that symplectic diagonally implicit Runge--Kutta (SyDIRK) methods, applied to a quadratic projectable vector field, are precisely the Runge--Kutta methods that descend to a method (generally not of Runge--Kutta type) in the projected variables. We illustrate our results with several examples in both conservative and non-conservative dynamics.
\end{abstract}

\maketitle

\section{Introduction}

In computational mathematics, Runge--Kutta methods are among the most
widely used classes of methods for numerical integration of ordinary
differential equations. Interestingly, many of the most popular
general-purpose Runge--Kutta methods have serious drawbacks when
applied to the dynamics that arise in physical systems: for example,
they may introduce artificial numerical dissipation into conservative
systems. This has inspired a substantial line of research into
\emph{structure-preserving numerical integrators} that better capture
the dynamics of such systems. We refer the reader to the excellent
survey text by \citet*{HaLuWa2006}.

Beginning with seminal work in the late 1980s, \emph{symplectic}
Runge--Kutta methods that preserve the structure of canonical
Hamiltonian systems were characterized and found to be deeply related
to those that preserve \emph{quadratic invariants}
\citep{Cooper1987,Sa1988,Lasagni1988,BoSc1994}. Recently,
\citet{McSt2024} showed that such methods have important
structure-preserving properties that manifest in the evolution of
arbitrary quadratic observables, even those that are \emph{not}
invariant, with consequences for conservative and non-conservative
systems alike.

The present paper is motivated by the question of when and whether it
is possible to compute the numerical evolution of quadratic
observables \emph{without} computing a full numerical trajectory for
the original system. By answering this question, we also obtain a
general construction for \emph{Lie--Poisson integrators} coinciding
with the quadratic projection of a symplectic Runge--Kutta method onto
a lower-dimensional space, allowing for more efficient computation of
the corresponding Hamiltonian dynamics. We also show that such methods
may be extended to non-conservative systems, including the 2D
Navier--Stokes equations with dissipation arising from nonzero
viscosity.

\bigskip

Consider a system of ordinary differential equations
$ \dot{y} = f (y) $ on a vector space $Y$. If
$ F \colon Y \rightarrow Z $ is $ C ^1 $, then the chain rule implies
that the observable $ z = F (y) $ evolves according to
$ \dot{z} = F ^\prime (y) f (y) $. The vector field $f$ is said to be
\emph{$F$-projectable} if there exists some vector field $g$ on $Z$
such that $ F ^\prime (y) f (y) = g \bigl( F (y) \bigr) $ for all
$y \in Y$, i.e.,
$f$ and $g$ are \emph{$F$-related}.
In this case, $F$ maps solutions of $ \dot{y} = f (y) $ to solutions
of $ \dot{z} = g (z) $.

We ask when this is true for \emph{numerical} solutions when $F$ is
quadratic. Specifically, suppose we integrate $ \dot{y} = f (y) $
numerically using an $s$-stage Runge--Kutta method with time-step size
$h$,
\begin{subequations}
  \label{e:rk}\noeqref{e:rk_stages,e:rk_step}
  \begin{align}
    Y _i &= y _0 + h \sum _{ j = 1 } ^s a _{ i j } f ( Y _j ) , \label{e:rk_stages}\\
    y _1 &= y _0 + h \sum _{ i = 1 } ^s b _i f ( Y _i ) ,\label{e:rk_step}
  \end{align}
\end{subequations}
where $ a _{ i j } $ and $ b _i $ are given coefficients defining the
method and $ Y _i $ are the internal stages. When does $F$ map this to
a numerical solution of $ \dot{z} = g (z) $ by some method on $Z$? We
would like this method to be defined in terms of $z$ alone, without
needing to compute a trajectory and internal stages for $y$.  In that
case, we say the Runge--Kutta method for $ \dot{y} = f (y) $
\emph{descends} to one for $ \dot{z} = g (z) $.

The primary motivation for this question comes from \emph{Lie--Poisson
  reduction} of Hamiltonian systems, arising in applications ranging
from rigid body mechanics to (magneto)hydrodynamics. In this setting,
$F$ is a quadratic \emph{momentum map} projecting dynamics from a
higher-dimensional symplectic space $Y$ onto a lower-dimensional space
$Z$ equipped with a Lie--Poisson (rather than symplectic)
structure. \emph{Collective symplectic integrators}
\citep{McMoVe2014c,McMoVe2014e} apply a symplectic Runge--Kutta method
to the higher-dimensional system $ \dot{y} = f (y) $, then project
along $F$ to obtain a numerical solution to $ \dot{z} = g (z) $ that
preserves the Lie--Poisson structure. Here, our question asks: When
can the projected solution be computed directly on $Z$, saving the
extra computational expense associated with first computing a
higher-dimensional trajectory on $Y$?

Some recent results may be understood as answering special cases of
this question. \Citet{DaSLes2022} proved that the isospectral
symplectic Runge--Kutta (IsoSyRK) schemes of \citet{ModViv2020}
descend to Lie--Poisson integrators on quadratic Lie algebras, with
intermediate stages also in the Lie algebra, if the underlying
Runge--Kutta scheme is a symplectic diagonally implicit Runge--Kutta
(SyDIRK) method. This is in line with previous results of
\citet{Viv2020}, in which the implicit midpoint method, a particular
case of SyDIRK, is shown to descend for such problems.

The starting point for our approach is to observe that, if the
coefficients $ a _{ i j } $ and $ b _i $ are such that the
Runge--Kutta method preserves quadratic invariants \citep{Cooper1987}
(and thus is also symplectic \citep{Sa1988,Lasagni1988,BoSc1994}),
then any quadratic observable $ z = F (y) $ evolves according to
\begin{equation}
  \label{e:rk_fe}
  F ( y _1 ) = F ( y _0 ) + h \sum _{ i = 1 } ^s b _i F ^\prime ( Y _i ) f ( Y _i ) ,
\end{equation}
which \citet{McSt2024} call \emph{quadratic functional
  equivariance}. Applying the $F$-relatedness of $f$ and $g$ lets us
write this as
\begin{equation*}
  z _1 = z _0 + h \sum _{ i = 1 } ^s b _i g ( Z _i ) ,
\end{equation*}
where $ z _1 = F ( y _1 ) $, $ z _0 = F ( y _0 ) $, and
$ Z _i = F ( Y _i ) $. It therefore remains only to determine when we
can compute the internal stages $ Z _i $ entirely on $Z$, without
first computing the stages $ Y _i $. By doing so, we not only
generalize previous results on special cases of Lie--Poisson
reduction, but we also obtain methods that can be applied to
non-conservative systems, such as the semidiscretized 2D
Navier--Stokes equations.

The paper is organized as follows:
\begin{itemize}
\item \cref{s:projectable_methods} introduces \emph{quadratic
    projectable Runge--Kutta methods}, which are symplectic $s$-stage
  Runge--Kutta methods satisfying additional coefficient conditions
  allowing them to descend to $s$-stage methods on $Z$.  In addition
  to $F$-projectability of $f$, we also assume that there exists a
  vector field $\gamma$ on $Z$ satisfying
  $ F ^{ \prime \prime } \bigl( f (y) , f (y) \bigr) = \gamma \bigl( F
  (y) \bigr) $, which holds for all of our target applications.

  The main result of this section (\cref{t:qprk_iff_sydirk}) shows
  that a Runge--Kutta method is quadratic projectable if and only if
  it is a SyDIRK method. This immediately yields
  (\cref{c:sydirk_descends}) a novel algorithmic description of the
  $s$-stage methods on $Z$ to which they descend, which are generally
  not of Runge--Kutta type.

\item \cref{s:projectable_vector_fields} examines the assumptions
  introduced in the previous section, obtaining sufficient conditions
  on $F$ and $f$ to determine the needed vector fields $g$ and
  $\gamma$ explicitly. This is done in two algebraic settings, one
  involving Jordan operator algebras (\cref{t:operator_projectable}),
  the other involving Lie algebra actions with invariant bilinear
  forms (\cref{t:quadratic_lie}), with momentum maps in Hamiltonian
  mechanics being an important special case of the latter. In each
  setting, these results combine with those of the previous section to
  give algorithmic descriptions of the integrators to which SyDIRK
  methods descend (\cref{c:sydirk_alpha_beta,c:quadratic_lie_sydirk}
  respectively), with several worked examples of each.

\item Finally, \cref{s:hydrodynamics} discusses the application of
  these methods to computational hydrodynamics, following the matrix
  semidiscretization approach of \citet{Zeitlin1991,Zeitlin2004} where
  (for instance) the 2D incompressible Euler equations are
  approximated by the Lie--Poisson equations for the matrix Lie
  algebra $ \mathfrak{su}(n) $ with $n$ large. While the earlier
  theory of IsoSyRK methods \citep{ModViv2020,Viv2020} had already
  been applied to the Euler equations \citep{MoVi2020,DaSLes2022}, we
  consider two extensions that take advantage of the greater
  generality of the results in the present paper. The first is to
  Lie--Poisson integration on semidirect product matrix Lie algebras,
  with application to magnetohydrodynamics. The second is to
  arbitrary, not-necessarily-conservative matrix flows, with
  application to the 2D Navier--Stokes equations.
\end{itemize}

\section{Quadratic projectable methods and their descendents}
\label{s:projectable_methods}

We begin this section with some preliminary results on the evolution
of quadratic observables by Runge--Kutta methods, obtaining quadratic
functional equivariance under the familiar coefficient conditions of
\citet{Cooper1987}, \citet{Sa1988}, and \citet{Lasagni1988}. Then, we
introduce additional coefficient conditions defining quadratic
projectable Runge--Kutta methods, show that these conditions are
uniquely satisfied by SyDIRK methods, and describe the methods to
which they descend in the projected variables.

\subsection{Preliminaries: Runge--Kutta methods and quadratic
  observables}
\label{sec:preliminaries}

Let $Y$ and $Z$ be finite-dimensional\footnote{We make this assumption
  for simplicity, and because the applications of interest are
  generally finite-dimensional. However, many of the results may be
  adapted to infinite-dimensional Banach spaces.}  real or complex
vector spaces and $ F \colon Y \rightarrow Z $ be a quadratic map,
meaning that we can expand $F (y) $ around any $ y _0 \in Y $ as
\begin{equation*}
  F (y) = F ( y _0 ) + F ^\prime ( y _0 ) ( y - y _0 ) + \frac{1}{2} F ^{ \prime \prime } ( y - y _0, y - y _0 ) .
\end{equation*}
Note that we simply write $ F ^{ \prime \prime } ( v, w ) $ rather
than $ F ^{ \prime \prime } ( y _0 ) ( v, w ) $, since
$ F ^{ \prime \prime } $ is a constant bilinear map for quadratic $F$,
i.e., it does not depend on $ y _0 $.

The following useful lemma describes how $ F (y) \in Z $ evolves
numerically whenever a Runge--Kutta method is applied to $ y \in Y $.

\begin{lemma}
  \label{l:rk_F}
  Suppose the Runge--Kutta method \eqref{e:rk} is applied to integrate
  $ \dot{y} = f (y) $ on $Y$. If $ F \colon Y \rightarrow Z $ is
  quadratic, then
  \begin{subequations}
    \label{e:rk_F}\noeqref{e:rk_F_stages,e:rk_F_step}
    \begin{alignat}{2}
      F ( Y _i )
      &= F ( y _0 ) &&+ h \sum _{ j = 1 } ^s a _{ i j } F ^\prime ( Y _j ) f ( Y _j ) \notag \\
      &&&+ \frac{ h ^2 }{ 2 } \sum _{ j , k = 1 } ^s ( a _{ i j } a _{ i k } - a _{ i j } a _{ j k } - a _{ i k } a _{ k j } ) F ^{ \prime \prime } \bigl( f ( Y _j ) , f ( Y _k ) \bigr), \label{e:rk_F_stages} \\
      F ( y _1 )
      &= F (y _0 ) &&+ h \sum _{ i = 1 } ^s b _i F ^\prime ( Y _i ) f ( Y _i ) \notag \\
      &&&+ \frac{ h ^2 }{ 2 } \sum _{ i , j = 1 } ^s ( b _i b _j - b _i a _{ i j } - b _j a _{ j i } ) F ^{ \prime \prime } \bigl( f ( Y _i ) , f ( Y _j ) \bigr) \label{e:rk_F_step} .
    \end{alignat}
  \end{subequations}
\end{lemma}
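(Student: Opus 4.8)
The plan is to prove the lemma by pure Taylor expansion, using nothing about the dynamics or about invariant preservation — only that $F$ is quadratic, so that its expansion around $y_0$ truncates exactly and its derivative is affine. The one algebraic fact that drives everything is that, for quadratic $F$, $F'(y) = F'(y_0) + F''(y - y_0, \cdot\,)$. Evaluating this at $y = Y_j$, pairing with $f(Y_j)$, and inserting the stage relation $Y_j - y_0 = h\sum_k a_{jk} f(Y_k)$ gives the substitution rule
\[
  F'(y_0) f(Y_j) = F'(Y_j) f(Y_j) - h \sum_{k=1}^s a_{jk}\, F''\bigl(f(Y_k), f(Y_j)\bigr),
\]
which converts the ``frozen'' linearization $F'(y_0)$ into the ``moving'' linearization $F'(Y_j)$ at the cost of an $h^2$ correction. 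This identity is what I will substitute everywhere a term $F'(y_0) f(Y_j)$ appears.

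For the internal stages, I would first write $F(Y_i) = F(y_0) + F'(y_0)(Y_i - y_0) + \tfrac12 F''(Y_i - y_0, Y_i - y_0)$, then plug in $Y_i - y_0 = h\sum_j a_{ij} f(Y_j)$ to obtain
\[
  F(Y_i) = F(y_0) + h\sum_{j} a_{ij}\, F'(y_0) f(Y_j) + \frac{h^2}{2}\sum_{j,k} a_{ij} a_{ik}\, F''\bigl(f(Y_j), f(Y_k)\bigr).
\]
Applying the substitution rule to the middle term replaces $F'(y_0)f(Y_j)$ by $F'(Y_j)f(Y_j)$ and produces an extra $h^2$ double sum with coefficient $a_{ij}a_{jk}$ paired against $F''(f(Y_k),f(Y_j))$. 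Combining the two $h^2$ contributions and symmetrizing the subtracted term — relabeling $j\leftrightarrow k$ and using that $F''$ is symmetric, so that $a_{ij}a_{jk}$ may be replaced by $\tfrac12(a_{ij}a_{jk} + a_{ik}a_{kj})$ — yields precisely the coefficient $\tfrac12(a_{ij}a_{ik} - a_{ij}a_{jk} - a_{ik}a_{kj})$ of \eqref{e:rk_F_stages}.

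The update step \eqref{e:rk_F_step} is handled identically: expand $F(y_1)$ about $y_0$, insert $y_1 - y_0 = h\sum_i b_i f(Y_i)$, apply the same substitution rule to each $F'(y_0)f(Y_i)$ (note it still involves the $a_{ij}$, since $Y_i$ is an internal stage, not the $b_i$), and symmetrize the resulting cross term $b_i a_{ij}$ against $F''(f(Y_j),f(Y_i))$ into $\tfrac12(b_i a_{ij} + b_j a_{ji})$; this gives the coefficient $\tfrac12(b_i b_j - b_i a_{ij} - b_j a_{ji})$. I do not expect any genuine obstacle; the only delicate point is disciplined index bookkeeping, in particular making sure that each $h^2$ double sum is rewritten with a symmetric coefficient matrix before its coefficients are read off, since only then are the stated expressions uniquely determined.
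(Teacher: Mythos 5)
Your proposal is correct and follows essentially the same route as the paper: expand $F(Y_i)$ (resp.\ $F(y_1)$) about $y_0$ using the stage (resp.\ step) relation, then use the affineness of $F'$ for quadratic $F$ — your ``substitution rule'' is exactly the paper's expansion of $F'(Y_j)$ about $y_0$ — and symmetrize the resulting $h^2$ double sum to read off the stated coefficients. No gaps; the bookkeeping you describe matches the paper's proof step for step.
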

  
\begin{proof}
  Expanding $ F ( Y _i ) $ around $ y _0 $ using \eqref{e:rk_stages} gives
  \begin{equation}
    F ( Y _i ) = F ( y _0 ) + h \sum _{ j = 1 } ^s a _{ i j } F ^\prime ( y _0 ) f ( Y _j ) + \frac{ h ^2 }{ 2 } \sum _{ j, k = 1 } ^s a _{ i j } a _{ i k } F ^{ \prime \prime } \bigl( f ( Y _j ), f (Y _k ) \bigr) . \label{e:rk_F_stages_expansion}
  \end{equation}
  For the linear terms, we can similarly expand $ F ^\prime ( Y _j ) $ around $ y _0 $ to obtain
  \begin{equation*}
    F ^\prime ( Y _j ) v = F ^\prime ( y _0 ) v + h \sum _{ k = 1 } ^s a _{ j k } F ^{ \prime \prime } \bigl( v , f ( Y _k ) \bigr) ,
  \end{equation*}
  for all $ v \in Y $. Therefore, taking $ v = f ( Y _j ) $ implies
  \begin{multline*}
      h \sum _{ j = 1 } ^s a _{ i j } F ^\prime ( y _0 ) f ( Y _j ) \\
    \begin{aligned}
      &= h \sum _{ j = 1 } ^s a _{ i j } F ^\prime ( Y _j ) f ( Y _j ) - h ^2 \sum _{ j , k = 1 } ^s a _{ i j } a _{ j k } F ^{ \prime \prime } \bigl( f ( Y _j ) , f ( Y _k ) \bigr) \\
      &= h \sum _{ j = 1 } ^s a _{ i j } F ^\prime ( Y _j ) f ( Y _j ) - \frac{ h ^2 }{ 2 } \sum _{ j , k = 1 } ^s ( a _{ i j } a _{ j k } + a _{ i k } a _{ k j } ) F ^{ \prime \prime } \bigl( f ( Y _j ) , f ( Y _k ) \bigr),
    \end{aligned}
  \end{multline*}
  where the last line symmetrizes the terms of the double sum in $j$
  and $k$. Substituting this into \eqref{e:rk_F_stages_expansion}
  yields \eqref{e:rk_F_stages}. A similar argument is used to prove
  \eqref{e:rk_F_step}.
\end{proof}

An immediate corollary of \eqref{e:rk_F_step} is a special case of the
results in \citet{McSt2024}: a Runge--Kutta method is quadratic
functionally equivariant if its coefficients satisfy the condition of
\citet{Cooper1987} to preserve quadratic invariants. As is well known,
this same coefficient condition also implies that the Runge--Kutta
method is symplectic; cf.~\citet{Sa1988,Lasagni1988}.

\begin{corollary}
  If the coefficients of a Runge--Kutta method satisfy
  \begin{equation}
    \label{e:rk_coeffs_symplectic}
    b _i b _j - b _i a _{ i j } - b _j a _{ j i } = 0 , \qquad i, j = 1 , \ldots, s ,
  \end{equation}
  then the method satisfies the quadratic functional equivariance
  condition \eqref{e:rk_fe}. In particular, if
  $ F ^\prime (y) f (y) = 0 $ for all $ y \in Y $, so that $F$ is
  invariant along $f$, then $ F ( y _1 ) = F ( y _0 ) $.
\end{corollary}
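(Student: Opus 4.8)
The plan is to read the result directly off \cref{l:rk_F}. The second formula there expresses $F(y_1)$ as $F(y_0)$, plus the linear term $h\sum_{i=1}^s b_i F'(Y_i) f(Y_i)$, plus a quadratic remainder whose coefficient on $F''\bigl(f(Y_i),f(Y_j)\bigr)$ is exactly $b_i b_j - b_i a_{ij} - b_j a_{ji}$. So the first and only substantive step is to impose the hypothesis \eqref{e:rk_coeffs_symplectic}: since that identity holds for every pair $(i,j)$, each coefficient in the double sum vanishes and the entire quadratic remainder disappears. What is left is precisely $F(y_1) = F(y_0) + h\sum_{i=1}^s b_i F'(Y_i) f(Y_i)$, which is the quadratic functional equivariance condition \eqref{e:rk_fe}.

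For the final assertion, assume in addition that $F'(y) f(y) = 0$ for all $y \in Y$. Then $F'(Y_i) f(Y_i) = 0$ for each internal stage $Y_i$, so every summand in the linear term of \eqref{e:rk_fe} vanishes, giving $F(y_1) = F(y_0)$.

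I expect no real obstacle here: all of the work has already been carried out in deriving the formula for $F(y_1)$ in \cref{l:rk_F}, and the corollary is just the observation that \eqref{e:rk_coeffs_symplectic} is exactly the condition under which the quadratic remainder vanishes. The one point worth flagging is that the symplecticity condition must be assumed for all indices $i, j$, not merely for $i = j$; this is what guarantees that the full double sum, and not just its diagonal, collapses to zero.
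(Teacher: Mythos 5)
Your proof is correct and is exactly the paper's argument: the corollary is presented there as an immediate consequence of \eqref{e:rk_F_step}, with the hypothesis \eqref{e:rk_coeffs_symplectic} killing the quadratic remainder term by term and the invariance case following since every summand $b_i F'(Y_i) f(Y_i)$ vanishes. Nothing to add.
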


We briefly recall the relationship between the conditions
\eqref{e:rk_coeffs_symplectic} and some familiar classes of
Runge--Kutta methods. The coefficients $ a _{ i j } $ and $ b _i $ of
an $s$-stage Runge--Kutta method are often presented in the form of a
\emph{Butcher tableau}, written as
\begin{equation*}
  \begin{array}[b]{|ccc}
    a _{ 1 1 } & \cdots & a _{ 1 s } \\
    \vdots & \ddots & \vdots \\
    a _{ s s } & \cdots & a _{ s s } \\
    \hline
    b _1 & \cdots & b _s 
  \end{array} \quad .
\end{equation*}
If $ a _{ i j } = 0 $ whenever $ j \geq i $ (i.e., the matrix of
coefficients $ a _{ i j } $ is strictly lower-triangular), then
\eqref{e:rk_stages} gives an explicit formula for each stage $ Y _i $
in terms of the previous stages $ Y _1 , \ldots, Y _{ i -1 } $, and
the method is said to be an \emph{explicit Runge--Kutta} (ERK)
method. Otherwise, \eqref{e:rk_stages} defines a system of equations
that must be solved to obtain $ Y _1 , \ldots, Y _s $, and the method
is said to be an \emph{implicit Runge--Kutta} (IRK) method. An IRK
method satisfying $ a _{ i j } = 0 $ whenever $ j > i $ (i.e., the
matrix of coefficients $ a _{ i j } $ has nonzero entries on the
diagonal but not above it) is said to be a \emph{diagonally implicit
  Runge--Kutta} (DIRK) method. A DIRK method allows us to solve
\eqref{e:rk_stages} for each $ Y _i $ one-at-a-time, in
sequence---similarly to an ERK method---rather than solving a larger
system for all $s$ stages simultaneously, making this a fairly mild
form of implicitness.

There are no ERK methods satisfying \eqref{e:rk_coeffs_symplectic},
other than the trivial method $ y _1 = y _0 $, since taking $ i = j $
in \eqref{e:rk_coeffs_symplectic} implies $ b _i = 0 $ for all
$ i = 1 , \ldots, s $. However, there are many well-known families of
IRK methods satisfying \eqref{e:rk_coeffs_symplectic}, including DIRK
methods.

\begin{example}[the implicit midpoint method]
  The implicit midpoint method is the $1$-stage method with
  coefficients $ a _{ 1 1 } = \frac{1}{2} $ and $ b _1 = 1 $, directly
  seen to satisfy \eqref{e:rk_coeffs_symplectic}.
\end{example}

\begin{example}[Gauss--Legendre collocation methods]
  \label{ex:gauss}
  The $s$-stage Gauss--Legendre collocation method, whose collocation
  points correspond to the roots of the degree-$s$ Legendre
  polynomial, is an order-$ 2 s $ IRK method satisfying
  \eqref{e:rk_coeffs_symplectic}; see \citet[Section
  IV.2.1]{HaLuWa2006}. The $ s = 1 $ case is the implicit midpoint
  method. The $ s = 2 $ and $ s = 3 $ methods have the corresponding
  Butcher tableaux
  \begin{equation*}
    \def\arraystretch{1.5}
    \begin{array}[b]{|cc}
      \frac{1}{4} & \frac{1}{4} - \frac{\sqrt{3}}{6} \\
      \frac{1}{4} + \frac{\sqrt{3}}{6} & \frac{1}{4} \\
      \hline
      \frac{1}{2} & \frac{1}{2}
    \end{array} \quad , \qquad
    \begin{array}[b]{|ccc}
      \frac{5}{36} & \frac{2}{9} - \frac{\sqrt{15}}{15} & \frac{5}{36} - \frac{\sqrt{15}}{30} \\
      \frac{5}{36} + \frac{\sqrt{15}}{24} & \frac{2}{9} & \frac{5}{36} - \frac{\sqrt{15}}{24} \\
      \frac{5}{36} + \frac{\sqrt{15}}{30} & \frac{2}{9} + \frac{\sqrt{15}}{15} & \frac{5}{36} \\
      \hline
      \frac{5}{18} & \frac{4}{9} & \frac{5}{18}
    \end{array} \quad .
  \end{equation*}
  These methods are implicit but not diagonally implicit, except when
  $ s = 1 $.
\end{example}

\begin{example}[SyDIRK methods]
  \label{ex:sydirk}
  Given nonzero $ b _1, \ldots, b _s $, there is a unique diagonally
  implicit method, called a \emph{symplectic diagonally implicit
    Runge--Kutta} (SyDIRK) method, satisfying
  \eqref{e:rk_coeffs_symplectic}, whose coefficients are given by
    \begin{equation}
    \label{e:sydirk_coeffs}
    a _{ i j } =
    \begin{cases}
      b _j , & j < i , \\
      b _i / 2 , & j = i ,\\
      0 , & j > i .
    \end{cases}
  \end{equation}
  The $ s = 1 $ case is the implicit midpoint method, and it can be
  seen that any $s$-stage SyDIRK method is a composition of implicit
  midpoint steps with step sizes $ h b _1, \ldots, h b _s $. That is,
  if $ \Phi _h $ is the map $ y _0 \mapsto y _1 $ corresponding to the
  implicit midpoint method with step size $h$, then an $s$-stage
  SyDIRK method corresponds to the map
  $ \Psi _h = \Phi _{ h b _s } \circ \cdots \circ \Phi _{ h b _1 }
  $. Higher-order SyDIRK methods may therefore be obtained by applying
  the order theory for symmetric compositions of symmetric methods;
  cf.~\citet{MuSa1999}. The order conditions are substantially simpler
  than those for arbitrary Runge--Kutta methods, which involve
  $ a _{ i j } $ and $ b _i $. The special form
  \eqref{e:sydirk_coeffs} ensures that the order conditions only
  involve $ b _i $, and further simplifications result from the
  symmetry assumption $ b _i = b _{ s - i } $.

  In particular, the order-$4$ condition
  $ \sum _{ i = 1 } ^s b _i ^3 = 0 $ implies that at least one of the
  $ b _i $ must be negative to attain higher order than the implicit
  midpoint method. This is in contrast with the methods of
  \cref{ex:gauss}, where the $ b _i $ are the necessarily-positive
  Gauss--Legendre quadrature weights on $ [ 0, 1 ]$. Consequently,
  SyDIRK methods do not have the strong stability properties for stiff
  systems (e.g., algebraic stability) of Gauss--Legendre collocation
  and other symplectic methods with nonnegative $ b _i $;
  \eqref{e:rk_coeffs_symplectic} alone gives a weaker notion of
  stability that \citet{Cooper1987} calls ``orbital stability,'' which
  is precisely conservation of quadratic invariants. Despite this
  stability trade-off, higher-order SyDIRK and other symplectic
  composition methods remain widely used in geometric numerical
  integration.

  The minimal-stage order-$4$ SyDIRK method with symmetric $ b _i $ is
  the $ s = 3 $ method of \citet{Yoshida1990} and \citet{Suzuki1990},
  \begin{equation*}
    b _1 = b _3 = \frac{ 1 }{ 2 - \sqrt[3]{2} }, \qquad b _2 = - \frac{ \sqrt[3]{2} }{ 2 - \sqrt[3]{2} },
  \end{equation*}
  Interestingly, with SyDIRK and other composition methods, one can
  often improve performance by taking more than the minimum number of
  stages at a given order, allowing the coefficients to be tuned to
  lower the error constant. A well-known example is the order-$4$
  method of \citet{Suzuki1990} with $ s = 5 $ and coefficients
  \begin{equation*}
    b _1 = b _2 = b _4 = b _5 = \frac{ 1 }{ 4 - \sqrt[3]{4}}, \qquad b _3 = - \frac{ \sqrt[3]{4} }{ 4 - \sqrt[3]{4} }.
  \end{equation*}
  See the survey in \citet[Section V.3.2]{HaLuWa2006} and references
  therein.
\end{example}

\begin{remark}
  For SyDIRK methods, there is no loss of generality in assuming that
  $ b _1 , \ldots, b _s $ are all nonzero: if $ b _i = 0 $, then
  \eqref{e:rk_coeffs_symplectic} implies $ b _j a _{ j i } = 0 $ for
  all $j$, so stage $i$ has no affect on the numerical solution and
  may be removed. See Theorem~VI.4.4 and the remark following its
  proof in \citet{HaLuWa2006}.
\end{remark}

\subsection{Quadratic projectable methods}
\label{sec:qprk}

We now turn our attention to the main question: which of these methods
allow us to express \eqref{e:rk_F} entirely in terms of $ z = F (y) $
when $f$ is $F$-projectable? Recall that $f$ is $F$-projectable when
there exists a vector field $g$ on $Z$ such that
$ F ^\prime (y) f (y) = g \bigl( F (y) \bigr) $, i.e., $f$ and $g$ are
$F$-related. For a quadratic functionally equivariant Runge--Kutta
method, we may then write \eqref{e:rk_F_step} as
\begin{equation*}
  z _1 = z _0 + h \sum _{ i = 1 } ^s b _i g ( Z _i ) ,
\end{equation*}
where $ z _1 \coloneqq F ( y _1 ) $, $ z _0 \coloneqq F ( y _0 ) $,
and $ Z _i \coloneqq F ( Y _i ) $. It remains only to consider when we
can compute the internal stages $ Z _i $ entirely on $Z$, without
first computing the stages $ Y _i $.

As an initial idea in this direction, one might think to consider
methods where
\begin{equation*}
  a _{ i j } a _{ i k } - a _{ i j } a _{ j k } - a _{ i k } a _{ k j } = 0 , \qquad i, j, k = 1 , \ldots, s ,
\end{equation*}
which would cause the quadratic terms of \eqref{e:rk_F_stages} to
vanish, just as \eqref{e:rk_coeffs_symplectic} does for
\eqref{e:rk_F_step}. However, this is too much to hope for: taking
$ i = j = k $ shows that $ a _{ i i } = 0 $ for all $i$; taking
$ j = k $ then shows that $ a _{ i j } = 0 $ for all $ i, j $; and
taking $ i = j $ in \eqref{e:rk_coeffs_symplectic} finally shows that
$ b _i = 0 $ for all $i$. Hence, this condition is too strong to be
satisfied by any nontrivial Runge--Kutta method. Instead, we consider
methods satisfying the following slightly weaker condition.

\begin{definition}
  A Runge--Kutta method is \emph{quadratic projectable} if it
  satisfies \eqref{e:rk_coeffs_symplectic} and
  \begin{equation}
  \label{e:rk_coeffs_projectable}
    a _{ i j } a _{ i k } - a _{ i j } a _{ j k } - a _{ i k } a _{ k j } = 0 , \qquad i, j, k = 1 , \ldots, s \quad (j \neq k) .
  \end{equation}
\end{definition}

For a quadratic projectable Runge--Kutta method, we may then write
\eqref{e:rk_F_stages} as
\begin{align*}
  Z _i
  &= z _0 + h \sum _{ j = 1 } ^s a _{ i j } g ( Z _j ) + \frac{ h ^2 }{ 2 } \sum _{ j = 1 } ^s ( a _{ i j } ^2 - 2 a _{ i j } a _{ j j } ) F ^{ \prime \prime } \bigl( f ( Y _j ) , f ( Y _j ) \bigr) \\
  &= z _0 + h \sum _{ j = 1 } ^s a _{ i j }\biggl[  g ( Z _j ) + \frac{ h }{ 2 }  \bigl( a _{ i j } - 2 a _{  j j } \bigr)  F ^{ \prime \prime } \bigl( f ( Y _j ) , f ( Y _j ) \bigr) \biggr] ,
\end{align*}
since only the quadratic terms with $ j = k $ remain in the sum. If we
have the additional assumption (discussed further in
\cref{s:projectable_vector_fields}) that
$ F ^{ \prime \prime } \bigl( f (y) , f (y) \bigr) = \gamma \bigl( F
(y) \bigr) $ for some vector field $\gamma$ on $Z$, then altogether
\begin{equation*}
  Z _i = z _0 + h \sum _{ j = 1 } ^s a _{ i j }\biggl[  g ( Z _j ) + \frac{ h }{ 2 }  \bigl( a _{ i j } - 2 a _{  j j } \bigr) \gamma ( Z _j ) \biggr],
\end{equation*}
and thus the method descends to one on $Z$ involving the vector fields
$g$ and $\gamma$.

We are now ready to prove the main result, establishing the
relationship between quadratic projectable Runge--Kutta methods and
SyDIRK methods, which we do with the aid of the following lemma.

\begin{lemma}
  \label{l:partial_order}
  If \eqref{e:rk_coeffs_projectable} holds, then we have the following
  properties:
  \begin{enumerate}[label=\textup{(\alph*)}]
  \item For all $ i \neq j $, if $ a _{ i j } \neq 0 $, then $ a _{ j i } = 0 $. \label{i:partial_order_antisymmetry}
  \item For all $i, j, k $, if $ a _{ i k } \neq 0 $ and
    $ a _{ k j } \neq 0 $, then $ a _{ i j } \neq 0 $. \label{i:partial_order_transitivity}
  \end{enumerate}
\end{lemma}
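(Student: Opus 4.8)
The plan is to obtain both properties directly from the defining relation \eqref{e:rk_coeffs_projectable} by specializing the index triple appropriately; nothing beyond elementary algebra is required. The one thing to keep straight throughout is which index plays the role of the free first index in \eqref{e:rk_coeffs_projectable} and which two indices are the ones required to differ.

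For part \ref{i:partial_order_antisymmetry}, I would fix $ i \neq j $ and apply \eqref{e:rk_coeffs_projectable} with the free first index equal to $j$ and the (admissible, since $ i \neq j $) distinct pair equal to $ (i,j) $. This yields $ a _{ j i } a _{ j j } - a _{ j i } a _{ i j } - a _{ j j } a _{ j i } = 0 $; the first and third terms cancel, leaving $ a _{ i j } a _{ j i } = 0 $. Hence $ a _{ i j } \neq 0 $ forces $ a _{ j i } = 0 $.

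For part \ref{i:partial_order_transitivity}, I would first dispose of the cases where two of $ i , j , k $ coincide. If $ k = i $ or $ k = j $, the desired conclusion $ a _{ i j } \neq 0 $ is literally one of the two hypotheses; if $ i = j $ with $ i \neq k $, the hypotheses $ a _{ i k } \neq 0 $ and $ a _{ k i } \neq 0 $ already contradict part \ref{i:partial_order_antisymmetry}, so that subcase is vacuous; and $ i = j = k $ is immediate. In the remaining case, with $ i , j , k $ pairwise distinct, I would apply \eqref{e:rk_coeffs_projectable} with free index $i$ and distinct pair $ (k,j) $, obtaining $ a _{ i k } a _{ i j } - a _{ i k } a _{ k j } - a _{ i j } a _{ j k } = 0 $, i.e.\ $ a _{ i j } \bigl( a _{ i k } - a _{ j k } \bigr) = a _{ i k } a _{ k j } $. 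Since $ a _{ i k } \neq 0 $ and $ a _{ k j } \neq 0 $, the right-hand side is nonzero, so $ a _{ i j } \neq 0 $.

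I do not anticipate a genuine obstacle here; the only slightly delicate point is the short case analysis for coincident indices in part \ref{i:partial_order_transitivity}, which is needed precisely because \eqref{e:rk_coeffs_projectable} is only assumed for $ j \neq k $. It is worth remarking afterwards that, taken together, \ref{i:partial_order_antisymmetry} and \ref{i:partial_order_transitivity} say exactly that the relation $ i \succ j \iff a _{ i j } \neq 0 $ (for $ i \neq j $) is a strict partial order, which is what will subsequently allow the stages to be permuted into the staircase pattern characteristic of a DIRK method.
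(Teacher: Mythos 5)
Your proof is correct and takes essentially the same approach as the paper: both results follow by specializing \eqref{e:rk_coeffs_projectable} to particular index triples, with your instantiation $(j,i,j)$ for part \ref{i:partial_order_antisymmetry} being the mirror image of the paper's $k=i\neq j$, and your main identity for part \ref{i:partial_order_transitivity} being the same equation the paper uses. The only (harmless) difference is stylistic: the paper argues part \ref{i:partial_order_transitivity} by contrapositive, which handles $i=j$ and $i=k$ uniformly and avoids your extra case analysis for coincident indices, whereas your direct argument is equally valid.
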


\begin{proof}
  Taking $ k = i \neq j $ in \eqref{e:rk_coeffs_projectable} gives
  $ a _{ i j } a _{ j i } = 0 $, which proves
  \ref{i:partial_order_antisymmetry}. Next, if $ j \neq k $, then
  taking $ a _{ i j } = 0 $ in \eqref{e:rk_coeffs_projectable} gives
  $ a _{ i k } a _{ k j } = 0 $, which proves the contrapositive of
  \ref{i:partial_order_transitivity}. Finally, if $ j = k $, then
  \ref{i:partial_order_transitivity} holds trivially.
\end{proof}

\begin{theorem}
  \label{t:qprk_iff_sydirk}
  If a Runge--Kutta method satisfies \eqref{e:rk_coeffs_projectable},
  then it is equivalent (up to permutation of stages) to an ERK or
  DIRK method. Consequently, a Runge--Kutta method is quadratic
  projectable if and only if it is a SyDIRK method.
\end{theorem}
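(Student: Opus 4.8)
The plan is to extract a strict partial order on the stage indices from \cref{l:partial_order} and then perform a topological sort. On $\{1,\dots,s\}$, declare $i \succ j$ whenever $i \neq j$ and $a_{ij} \neq 0$; equivalently, form the directed graph with an arc $i \to j$ exactly when $i\neq j$ and $a_{ij}\neq 0$. This relation is irreflexive by construction. It is also transitive: if $i\to k$ and $k\to j$, then \cref{l:partial_order}\ref{i:partial_order_antisymmetry} forbids a $2$-cycle, so $i\neq j$, and \cref{l:partial_order}\ref{i:partial_order_transitivity} gives $a_{ij}\neq 0$, i.e.\ $i\to j$. Hence $\succ$ is a strict partial order. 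Every finite strict partial order extends to a total order (pick a minimal element, remove it, induct on $s$), so after a permutation of the stages we may assume $i\succ j$ implies $i>j$; equivalently $a_{ij}=0$ whenever $j>i$. By definition the relabeled method is explicit (if also $a_{ii}=0$ for every $i$) or diagonally implicit, which is the first assertion.

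For the equivalence, first suppose the method is SyDIRK with coefficients \eqref{e:sydirk_coeffs}. Condition \eqref{e:rk_coeffs_symplectic} is immediate: for $i=j$ it reads $b_i^2 - 2b_i(b_i/2)=0$, and for $i\neq j$ exactly one of $a_{ij}$, $a_{ji}$ equals the ``other'' weight while the other vanishes, so $b_ib_j - b_ia_{ij} - b_ja_{ji}=b_ib_j-b_ib_j=0$. For condition \eqref{e:rk_coeffs_projectable}, note that its left-hand side is symmetric in $j$ and $k$, so we may take $j<k$ and split on the position of $k$ relative to $i$. If $k>i$, then $a_{ik}=0$, and the remaining term $-a_{ij}a_{jk}$ vanishes as well, since either $j>i$ (so $a_{ij}=0$) or $j\le i<k$ (so $k>j$ and hence $a_{jk}=0$). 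If instead $j<k\le i$, then $a_{jk}=0$ and $a_{ij}=a_{kj}=b_j$, so the expression collapses to $a_{ij}a_{ik}-a_{ik}a_{kj}=b_j a_{ik}-a_{ik}b_j=0$ regardless of $a_{ik}$. Thus every SyDIRK method is quadratic projectable.

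Conversely, suppose the method is quadratic projectable. It satisfies \eqref{e:rk_coeffs_projectable}, so by the first part it is, after permuting stages (which preserves both coefficient conditions), explicit or DIRK; it also satisfies \eqref{e:rk_coeffs_symplectic}. Taking $i=j$ in \eqref{e:rk_coeffs_symplectic} gives $b_i(b_i-2a_{ii})=0$, so for an explicit method all $b_i=0$ and the method is the trivial map $y_1=y_0$. Otherwise the method is DIRK, and the remark following \eqref{e:sydirk_coeffs} applies: discarding any stages with $b_i=0$ (which do not affect the numerical solution) and invoking Theorem~VI.4.4 of \citet{HaLuWa2006}, a DIRK method obeying \eqref{e:rk_coeffs_symplectic} with nonzero weights must have the form \eqref{e:sydirk_coeffs}, i.e.\ it is SyDIRK.

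I expect the one genuinely delicate point to be the partial-order argument — in particular, checking transitivity, where one must first use antisymmetry to exclude the degenerate case $i=j$ before applying \cref{l:partial_order}\ref{i:partial_order_transitivity} — together with the minor bookkeeping needed to phrase the equivalence precisely (permuting stages and deleting zero-weight stages). The case analysis verifying \eqref{e:rk_coeffs_projectable} for SyDIRK methods and the remaining computations are routine.
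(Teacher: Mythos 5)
Your proof is correct and follows essentially the same route as the paper: extract the strict partial order of \cref{l:partial_order}, extend it to a total order to permute the stages into explicit/DIRK form, verify \eqref{e:rk_coeffs_symplectic} and \eqref{e:rk_coeffs_projectable} for the SyDIRK coefficients by the same casework (with $j$ and $k$ swapped), and invoke the remark on symplectic DIRK methods for the converse. Your extra care with transitivity and with the degenerate all-explicit case only makes explicit what the paper leaves implicit.
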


\begin{proof}
  For $ i \neq j $, define $ j \prec i $ whenever
  $ a _{ i j } \neq 0 $. \Cref{l:partial_order} says that $ \prec $ is
  a strict partial order, which may be extended to a total order on
  $ \{ 1, \ldots, s \} $. Permuting the stages to sort them according
  to this total order gives a lower-triangular matrix of
  $ a _{ i j } $ coefficients, which therefore corresponds to an ERK
  or DIRK method. In particular, for a quadratic projectable
  Runge--Kutta method, the symplecticity condition
  \eqref{e:rk_coeffs_symplectic} also holds, so the method is a SyDIRK
  method.

  Conversely, SyDIRK coefficients \eqref{e:sydirk_coeffs} clearly
  satisfy \eqref{e:rk_coeffs_symplectic}. To see that they also
  satisfy \eqref{e:rk_coeffs_projectable}, suppose without loss of
  generality that $ k < j $. Then $ a _{ j k } = b _k $ and
  $ a _{ k j } = 0 $, so
  \begin{equation*}
    a _{ i j } a _{ i k } - a _{ i j } a _{ j k } - a _{ i k } a _{ k j } = a _{ i j } ( a _{ i k } - b _k ) .
  \end{equation*}
  If $ i < j $, then $ a _{ i j } = 0 $, and the expression above
  vanishes. Otherwise, $ k < j \leq i $, so $ a _{ i k } = b _k $, and
  again, the expression above vanishes. Hence,
  \eqref{e:rk_coeffs_projectable} holds, and we conclude that SyDIRK
  methods are quadratic projectable.
\end{proof}

\begin{remark}
  From the discussion of order and stability in \cref{ex:sydirk}, it
  follows that a quadratic projectable Runge--Kutta method with
  nonnegative $ b _i $ has order at most $2$; higher-order methods
  must have at least one negative $ b _i $.
\end{remark}

\begin{corollary}
  \label{c:sydirk_descends}
  Let $f$ be a vector field on $Y$, let $ F \colon Y \rightarrow Z $
  be quadratic, and let $g$ and $\gamma$ be vector fields on $Z$ such
  that $ F ^\prime (y) f (y) = g \bigl( F (y) \bigr) $ and
  $ F ^{ \prime \prime } \bigl( f (y) , f (y) \bigr) = \gamma \bigl( F
  (y) \bigr) $ for all $ y \in Y $. Then an $s$-stage SyDIRK method
  for $ \dot{y} = f (y) $ descends to the following method for
  $ \dot{z} = g (z) $:
  \begin{subequations}
    \label{e:sydirk_reduced}
    \noeqref{e:sydirk_reduced_stages,e:sydirk_reduced_step}
    \begin{align}
      Z _i
      &= z _0 + h \sum _{ j = 1 } ^{i-1} b _j g ( Z _j ) + \frac{ h }{ 2 } b _i g ( Z _i ) - \frac{ h ^2 }{ 8 } b _i ^2 \gamma ( Z _i ) , \label{e:sydirk_reduced_stages} \\
      z _1
      &= z _0 + h \sum _{ i = 1 } ^s b _i g ( Z _i ) . \label{e:sydirk_reduced_step} 
    \end{align}
  \end{subequations}
\end{corollary}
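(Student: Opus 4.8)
The plan is to specialize \Cref{l:rk_F} to the SyDIRK coefficients \eqref{e:sydirk_coeffs} and read off \eqref{e:sydirk_reduced}. By the theorem above, a SyDIRK method satisfies both the symplecticity condition \eqref{e:rk_coeffs_symplectic} and the projectability condition \eqref{e:rk_coeffs_projectable}, so the reduction carried out in the paragraphs preceding \Cref{l:partial_order} applies directly. Using the hypotheses $ F ^\prime ( Y _j ) f ( Y _j ) = g ( Z _j ) $ and $ F ^{ \prime \prime } \bigl( f ( Y _j ) , f ( Y _j ) \bigr) = \gamma ( Z _j ) $, where $ Z _j \coloneqq F ( Y _j ) $, the internal-stage equation \eqref{e:rk_F_stages} becomes
\begin{equation*}
  Z _i = z _0 + h \sum _{ j = 1 } ^s a _{ i j } \Bigl[ g ( Z _j ) + \tfrac{ h }{ 2 } \bigl( a _{ i j } - 2 a _{ j j } \bigr) \gamma ( Z _j ) \Bigr] ,
\end{equation*}
while \eqref{e:rk_coeffs_symplectic} annihilates the quadratic terms of \eqref{e:rk_F_step}, leaving $ z _1 = z _0 + h \sum _i b _i g ( Z _i ) $, which is exactly \eqref{e:sydirk_reduced_step}.

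It then remains to evaluate the bracketed coefficient $ a _{ i j } \bigl( a _{ i j } - 2 a _{ j j } \bigr) $ using \eqref{e:sydirk_coeffs}, a short case check. For $ j > i $, $ a _{ i j } = 0 $ and the term drops. For $ j < i $, $ a _{ i j } = b _j $ and $ a _{ j j } = b _j / 2 $, so $ a _{ i j } - 2 a _{ j j } = 0 $ and only the linear contribution $ h b _j g ( Z _j ) $ survives. For $ j = i $, $ a _{ i i } = b _i / 2 $ gives $ a _{ i i } - 2 a _{ i i } = - b _i / 2 $, so the $ j = i $ summand contributes $ \tfrac{ h }{ 2 } b _i g ( Z _i ) - \tfrac{ h ^2 }{ 8 } b _i ^2 \gamma ( Z _i ) $. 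Summing over $j$ reproduces \eqref{e:sydirk_reduced_stages}.

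Finally, I would observe that \eqref{e:sydirk_reduced} is a self-contained integrator on $Z$: since $ Z _i $ depends only on $ Z _1 , \ldots , Z _i $, the stages can be solved one at a time --- each $ Z _i $ by an implicit equation in $ Z _i $ alone, just as for a DIRK method --- and $ z _1 $ is then explicit, so the SyDIRK trajectory of $ z = F (y) $ is recovered without ever forming the $ Y _i $ or $ y _1 $. I do not expect a genuine obstacle here; the only place requiring care is the diagonal coefficient $ a _{ i i } = b _i / 2 $, since that is precisely where the $ - \tfrac{ h ^2 }{ 8 } b _i ^2 \gamma ( Z _i ) $ term --- the feature distinguishing the descended method from an ordinary Runge--Kutta method --- originates.
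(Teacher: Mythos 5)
Your proposal is correct and follows essentially the same route as the paper: the paper's proof is exactly ``apply \cref{l:rk_F} (with the symplecticity and projectability conditions, as worked out before \cref{l:partial_order}) and substitute the SyDIRK coefficients \eqref{e:sydirk_coeffs},'' and your case check of $a_{ij}(a_{ij}-2a_{jj})$ for $j>i$, $j<i$, $j=i$ fills in the same computation the paper leaves implicit. The closing observation that the stages are lower-triangular in $Z_1,\ldots,Z_i$, so the scheme is genuinely self-contained on $Z$, is a correct and welcome extra detail.
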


\begin{proof}
  This follows immediately from \cref{l:rk_F} and
  \eqref{e:sydirk_coeffs}.
\end{proof}

\begin{example}[matrix Lie--Poisson reduction]
  \label{ex:lie-poisson}
  Let
  $ Y = \mathbb{R} ^{ n \times n } \times \mathbb{R} ^{ n \times n } $
  and $ Z = \mathbb{R} ^{ n \times n } $, where
  $ \mathbb{R} ^{ n \times n } $ denotes the space of real
  $ n \times n $ matrices, and let $ F ( q, p ) = q ^T p $. Suppose
  $ H \colon Y \rightarrow \mathbb{R} $ is a Hamiltonian of the form
  $ H = \eta \circ F $, i.e., $ H ( q, p ) = \eta ( q ^T p ) $, where
  $ \eta \colon Z \rightarrow \mathbb{R} $ is a reduced
  Hamiltonian. Hamilton's equations on $Y$ can be written as
  \begin{subequations}
    \label{e:matrix_hamiltonian}
    \begin{alignat}{2}
      \dot{q} &= \hphantom{-} \nabla _p H ( q, p ) &&= \hphantom{-} q \nabla \eta ( q ^T p ) ,\\
      \dot{p} &= - \nabla _q H ( q, p ) &&= - p \nabla \eta ( q ^T p ) ^T .
    \end{alignat}
  \end{subequations}
  We then observe that
  \begin{align*}
    \frac{\mathrm{d}}{\mathrm{d}t} ( q ^T p )
    &= \dot{q} ^T p + q ^T \dot{p} \\
    &= \nabla \eta ( q ^T p ) ^T q ^T p - q ^T p \nabla \eta ( q ^T p ) ^T \\
    &= \operatorname{ad} ^\ast _{ \nabla \eta ( q ^T p ) } q ^T p .
  \end{align*}
  In other words, the Hamiltonian vector field on $Y$,
  \begin{equation*}
    f ( q, p ) = \Bigl( q \nabla \eta ( q ^T p ) , - p \nabla \eta ( q ^T p ) ^T  \Bigr) ,
  \end{equation*}
  is $F$-related to the reduced Hamiltonian vector field on $Z$,
  \begin{equation*}
    g (z) = \operatorname{ad} ^\ast _{ \nabla \eta (z) } z ,
  \end{equation*}
  which are the \emph{Lie--Poisson equations} on
  $ Z \cong \mathfrak{gl}(n) ^\ast $. Observe also that
  \begin{equation*}
    F ^{ \prime \prime } \bigl( f (q, p), f ( q, p ) \bigr) = - 2 \nabla \eta ( q ^T p ) ^T q ^T p \nabla \eta ( q ^T p ) ^T \eqqcolon \gamma ( q ^T p ) ,
  \end{equation*}
  where
  $ \gamma (z) \coloneqq - 2 \nabla \eta (z) ^T z \nabla \eta (z) ^T
  $.

  From
  \cref{c:sydirk_descends}, it follows that a SyDIRK method
  for \eqref{e:matrix_hamiltonian} on $Y$ descends to the following
  Lie--Poisson method on $Z$:
  \begin{subequations}
    \label{e:sydirk_lie-poisson}
    \noeqref{e:sydirk_lie-poisson_stages,e:sydirk_lie-poisson_step}
    \begin{align}
      Z _i
      &= z _0 + h \sum _{ j = 1 } ^{i-1} b _j \operatorname{ad} ^\ast _{ \nabla \eta ( Z _j ) } Z _j + \frac{ h }{ 2 } b _i \operatorname{ad} ^\ast _{ \nabla \eta ( Z _i ) } Z _i + \frac{ h ^2 }{ 4 } b _i ^2 \nabla \eta ( Z _i ) ^T Z _i \nabla \eta ( Z _i ) ^T , \label{e:sydirk_lie-poisson_stages}\\
      z _1
      &= z _0 + h \sum _{ i = 1 } ^s b _i \operatorname{ad} ^\ast _{ \nabla \eta ( Z _i ) } Z _i \label{e:sydirk_lie-poisson_step}.
    \end{align}
  \end{subequations}
  For isospectral Hamiltonian systems, this coincides with the
  Lie--Poisson integrators obtained via reduction of SyDIRK methods by
  \citet{DaSLes2022}, who give an alternative expression of
  \eqref{e:sydirk_lie-poisson} in terms of the Cayley transform. To
  see the equivalence with the formulation in \citep{DaSLes2022},
  define the additional intermediate stages
  \begin{equation*}
    Z ^r _i \coloneqq z _0 + h \sum _{ j = 1 } ^i b _i \operatorname{ad} ^\ast _{ \nabla \eta ( Z _j ) } Z _j ,
  \end{equation*}
  where $ Z ^r _0 = z _0 $ and $ Z ^r _s = z _1 $. We may then rewrite
  \eqref{e:sydirk_lie-poisson} as
  \begin{alignat*}{2}
    Z ^r _{ i -1 } &= \biggl( I - \frac{ h }{ 2 } b _i \nabla \eta ( Z _i ) ^T \biggr) Z _i \biggl( I + \frac{ h }{ 2 } b _i \nabla \eta ( Z _i ) ^T \biggr) &&= \operatorname{dcay} ^{-1} _{ h b _i \nabla \eta ( Z _i ) ^T } Z _i ,\\
    Z ^r _i &= \biggl( I + \frac{ h }{ 2 } b _i \nabla \eta ( Z _i ) ^T \biggr) Z _i \biggl( I - \frac{ h }{ 2 } b _i \nabla \eta ( Z _i ) ^T \biggr) &&= \operatorname{dcay} ^{-1} _{ -h b _i \nabla \eta ( Z _i ) ^T } Z _i ,
  \end{alignat*}
  which coincides (modulo slight differences in notation) with
  \citep[Equation~3.6]{DaSLes2022}.
\end{example}

We conclude this section with a result on the numerical evolution of
quadratic observables $ G (z) $ by these methods, analogous to
\cref{l:rk_F}. This shows that \eqref{e:sydirk_reduced} is affine
functionally equivariant (which is clear since affine functions in $z$
are quadratic in $y$) but not quadratic functionally equivariant, in
general, due to the additional $ \mathcal{O} ( h ^3 ) $ term.

\begin{theorem}
  \label{t:quadratic_reduced}
  Under the hypotheses of \cref{c:sydirk_descends}, if
  $ G = G (z) $ is quadratic, then
  \begin{equation*}
    G ( z _1 ) = G ( z _0 ) + h \sum _{ i = 1 } ^s b _i G ^\prime ( Z _i ) g ( Z _i ) + \frac{ h ^3 }{ 8 } \sum _{ i = 1 } ^s b _i ^3 G ^{ \prime \prime } \bigl( g ( Z _i ) , \gamma ( Z _i ) \bigr) .
  \end{equation*}
\end{theorem}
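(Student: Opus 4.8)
The plan is to argue directly on $Z$ by Taylor expansion, in close parallel with the proof of \cref{l:rk_F}, using throughout that $G$ is quadratic, so that $ G ^{ \prime \prime } $ is a symmetric bilinear form and $ G ^{ \prime \prime \prime } = 0 $. In particular, since the reduced method \eqref{e:sydirk_reduced} is no longer of Runge--Kutta form, it is cleanest to expand in the $z$-variables themselves rather than to lift back to $Y$ (where $ G \circ F $ would be a quartic).

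First I would expand $ G ( z _1 ) $ around $ z _0 $. Since $ z _1 - z _0 = h \sum _i b _i g ( Z _i ) $ by \eqref{e:sydirk_reduced_step}, quadraticity of $G$ gives
\[
  G ( z _1 ) = G ( z _0 ) + h \sum _{ i = 1 } ^s b _i G ^\prime ( z _0 ) g ( Z _i ) + \frac{ h ^2 }{ 2 } \sum _{ i , j = 1 } ^s b _i b _j G ^{ \prime \prime } \bigl( g ( Z _i ) , g ( Z _j ) \bigr) .
\]
The next step is to replace $ G ^\prime ( z _0 ) $ by $ G ^\prime ( Z _i ) $ in the linear term. Since $ G ^{ \prime \prime \prime } = 0 $, the expansion $ G ^\prime ( z _0 ) = G ^\prime ( Z _i ) - G ^{ \prime \prime } ( Z _i - z _0 , - ) $ is exact; substituting the stage equation \eqref{e:sydirk_reduced_stages} for $ Z _i - z _0 $, then multiplying by $h b _i g ( Z _i ) $ and summing over $i$, the $\gamma$-term contributes precisely $ \tfrac{ h ^3 }{ 8 } \sum _i b _i ^3 G ^{ \prime \prime } \bigl( \gamma ( Z _i ) , g ( Z _i ) \bigr) $ (and nothing else, since $ G ^\prime $ is affine), while the remaining pieces combine — using the symmetry of $ G ^{ \prime \prime } $ together with the identity $ \sum _{ j < i } b _i b _j G ^{ \prime \prime } ( g ( Z _j ), g ( Z _i ) ) + \tfrac{1}{2} \sum _i b _i ^2 G ^{ \prime \prime } ( g ( Z _i ), g ( Z _i ) ) = \tfrac{1}{2} \sum _{ i , j } b _i b _j G ^{ \prime \prime } ( g ( Z _i ), g ( Z _j ) ) $ — into $ -\tfrac{ h ^2 }{ 2 } \sum _{ i , j } b _i b _j G ^{ \prime \prime } \bigl( g ( Z _i ) , g ( Z _j ) \bigr) $.

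Finally I would substitute this back into the expansion of $ G ( z _1 ) $: the genuine $ \mathcal{O} ( h ^2 ) $ contribution cancels exactly against the $ \tfrac{ h ^2 }{ 2 } \sum _{ i , j } b _i b _j G ^{ \prime \prime } ( g ( Z _i ), g ( Z _j ) ) $ from the quadratic part of the expansion, leaving
\[
  G ( z _1 ) = G ( z _0 ) + h \sum _{ i = 1 } ^s b _i G ^\prime ( Z _i ) g ( Z _i ) + \frac{ h ^3 }{ 8 } \sum _{ i = 1 } ^s b _i ^3 G ^{ \prime \prime } \bigl( g ( Z _i ) , \gamma ( Z _i ) \bigr) ,
\]
after using the symmetry of $ G ^{ \prime \prime } $ once more. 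The only point requiring care is the bookkeeping of the $ \mathcal{O} ( h ^2 ) $ and $ \mathcal{O} ( h ^3 ) $ terms: one must check that the lower-triangular structure of \eqref{e:sydirk_reduced_stages} — the partial sum $ \sum _{ j < i } $ together with the diagonal weight $ b _i / 2 $ — is exactly what makes the double sum symmetrize so that all $ \mathcal{O} ( h ^2 ) $ terms cancel, and that the $\gamma$-term, which enters the stage equation only at order $ h ^2 $, is multiplied solely by the $ \mathcal{O} ( 1 ) $ factor $ b _i g ( Z _i ) $ and hence produces exactly the stated order-$ h ^3 $ term. This is the direct analogue of the symmetrization step in the proof of \cref{l:rk_F}.
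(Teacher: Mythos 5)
Your proposal is correct and follows essentially the same route as the paper's proof: expand $G(z_1)$ around $z_0$ via \eqref{e:sydirk_reduced_step}, rewrite $G'(z_0)$ in terms of $G'(Z_i)$ using the stage equation \eqref{e:sydirk_reduced_stages} and the symmetry of $G''$, and observe that the $\mathcal{O}(h^2)$ double sums cancel while the $\gamma$-term yields the stated $\mathcal{O}(h^3)$ correction. Your explicit symmetrization identity is exactly the bookkeeping the paper performs implicitly, so there is nothing further to add.
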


\begin{proof}
  As in the proof of \cref{l:rk_F}, we can use
  \eqref{e:sydirk_reduced_step} to expand
  \begin{equation*}
    G ( z _1 ) = G (z _0 ) + h \sum _{ i = 1 } ^s b _i G ^\prime ( z _0 ) g ( Z _i ) + \frac{ h ^2 }{ 2 } \sum _{ i, j = 1 } ^s b _i b _j G ^{ \prime \prime } \bigl( g ( Z _i ) , g ( Z _j ) \bigr) .
  \end{equation*}
  Next, we use \eqref{e:sydirk_reduced_stages} to expand
  $ G ^\prime ( Z _i ) $ around $ z _0 $, obtaining
  \begin{equation*}
    G ^\prime ( Z _i ) v = G ^\prime ( z _0 ) v + h \sum _{ j = 1 } ^{ i -1 } b _j G ^{ \prime \prime } \bigl(  v , g ( Z _j ) \bigr) + \frac{ h }{ 2 } b _i G ^{ \prime \prime } \bigl( v , g ( Z _i ) \bigr) - \frac{ h ^2 }{ 8 } b _i ^2 G ^{ \prime \prime } \bigl( v , \gamma ( Z _i ) \bigr) ,
  \end{equation*}
  for all $ v \in Z $. Therefore, taking $ v = g ( Z _i ) $ implies
  \begin{multline*}
    h \sum _{ i = 1 } ^s b _i G ^\prime ( z _0 ) g ( Z _i ) = h \sum _{ i = 1 } ^s b _i G ^\prime ( Z _i ) g ( Z _i ) - \frac{ h ^2 }{ 2 } \sum _{ i, j = 1 } ^s b _i b _j G ^{ \prime \prime } \bigl( g ( Z _i ) , g ( Z _j ) \bigr) \\
    +  \frac{ h ^3 }{ 8 } \sum _{ i = 1 } ^s b _i ^3 G ^{ \prime \prime } \bigl( g ( Z _i ) , \gamma ( Z _i ) \bigr) .
  \end{multline*}
  Substituting this into the expression for $ G ( z _1 ) $, the double
  sums cancel, yielding the claimed result.
\end{proof}

\section{Quadratic projectable vector fields}
\label{s:projectable_vector_fields}

In this section, we study a family of vector fields that satisfies the
hypotheses of \cref{c:sydirk_descends}, for which SyDIRK
methods therefore descend from $Y$ to $Z$. We begin by introducing a
general setting for $F$-projectability, where an additional algebraic
condition involving Jordan operator algebras allows us to write
$ F ^{ \prime \prime } (y) \bigl( f (y) , f (y) \bigr) = \gamma \bigl(
F (y) \bigr) $. After this, we consider a different algebraic setting
involving Lie algebra actions, including the important case of
momentum maps in Hamiltonian mechanics, and establish similar results
on projectability and descent of SyDIRK methods.

\subsection{Projectability and Jordan operator algebras}
\label{sec:jordan}

We first characterize $F$-projectability with respect to an arbitrary
$ C ^2 $ map $ F \colon Y \rightarrow Z $, with quadratic maps being a
special case. Throughout this section, we denote by $ L ( Y, Y ) $ and
$ L ( Z, Z ) $ the spaces of linear operators on $Y$ and on $Z$,
respectively.

\begin{proposition}
  \label{prop:projectability}
  Given a $ C ^2 $ map $ F \colon Y \rightarrow Z $, let $f$ be a
  $ C ^1 $ vector field on $Y$. Then $f$ is $F$-projectable if and
  only if, for all $ y _0 \in Y $, there exists a continuous map
  $ B \colon Z \rightarrow L ( Z, Z ) $ such that for all $ y \in Y $,
  \begin{equation}
    \label{e:projectability_condition}
    F ^\prime (y) f (y) = F ^\prime ( y _0 ) f ( y _0 ) + B \bigl( F (y) \bigr) \bigl( F (y) - F ( y _0 ) \bigr) .
  \end{equation}
\end{proposition}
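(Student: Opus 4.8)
The plan is to prove the two implications separately; the \emph{if} direction is a short specialization argument, and the \emph{only if} direction rests on Hadamard's lemma (Taylor's theorem with first‑order integral remainder). For the \emph{if} direction, fix $y_0 \in Y$ and let $B \colon Z \to L(Z,Z)$ be the continuous map supplied by \eqref{e:projectability_condition}. Setting $y$ equal to any point with $F(y) = F(y_0)$ in \eqref{e:projectability_condition} already shows that $y \mapsto F'(y) f(y)$ depends on $y$ only through $F(y)$, which is why this is the right condition; to exhibit an honest vector field on $Z$, I would simply define
\[
  g(z) \coloneqq F'(y_0) f(y_0) + B(z)\bigl( z - F(y_0) \bigr) ,
\]
which is continuous because $B$ is, and which satisfies $g\bigl( F(y) \bigr) = F'(y) f(y)$ for every $y \in Y$ by \eqref{e:projectability_condition}. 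Hence $f$ is $F$-projectable.

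For the \emph{only if} direction, suppose $F'(y) f(y) = g\bigl( F(y) \bigr)$ for some vector field $g$ on $Z$, fix $y_0 \in Y$, and put $z_0 \coloneqq F(y_0)$. Applying Hadamard's lemma to $g$ along the straight segment from $z_0$ to $z$ gives
\[
  g(z) = g(z_0) + \biggl( \int_0^1 g'\bigl( z_0 + t(z - z_0) \bigr) \, \mathrm{d}t \biggr)( z - z_0 ) ,
\]
so I would set $B(z) \coloneqq \int_0^1 g'\bigl( z_0 + t(z - z_0) \bigr)\, \mathrm{d}t \in L(Z,Z)$, which is continuous in $z$. Substituting $z = F(y)$ and using $g(z_0) = F'(y_0) f(y_0)$ together with $g\bigl( F(y) \bigr) = F'(y) f(y)$ recovers exactly \eqref{e:projectability_condition}.

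The one delicate point — and the main obstacle — is the regularity of $g$: Hadamard's lemma needs $g$ to be $C^1$ (at least along each segment $[z_0, z]$), and continuously ``dividing'' $g(z) - g(z_0)$ by $z - z_0$ genuinely fails for a merely continuous $g$, so some such hypothesis is needed for the equivalence to hold as stated. I would address this by taking $g$ to be a $C^1$ vector field, as is implicit in the term; this is automatic when $F$ is a submersion, since then $g$ is locally $F'(\cdot) f(\cdot)$ composed with a $C^1$ local section of $F$, and for quadratic $F$ — the case of primary interest here — everything in sight is polynomial, so the issue does not arise.
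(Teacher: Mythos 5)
Your proof is correct and follows essentially the same route as the paper's: the converse direction defines $g(z) = F'(y_0) f(y_0) + B(z)\bigl(z - F(y_0)\bigr)$ exactly as the paper does, and the forward direction uses the fundamental theorem of calculus (Hadamard's lemma) along the segment from $F(y_0)$ to $z$ to produce $B(z) = \int_0^1 g'\bigl((1-t)F(y_0) + t z\bigr)\,\mathrm{d}t$, which is the paper's argument verbatim. Your regularity caveat is also consistent with the paper, whose proof explicitly takes the projected vector field $g$ to be $C^1$.
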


\begin{proof}
  $ ( \Rightarrow ) $ Suppose $f$ is $F$-projectable, i.e., there
  exists a $ C ^1 $ vector field $g$ on $Z$ such that
  $ F ^\prime (y) f (y) = g \bigl( F (y) \bigr) $ for all $ y \in Y
  $. Now, for any $ z _0, z \in Z $, applying the fundamental theorem
  of calculus and chain rule along the line segment
  $ t \mapsto ( 1 - t ) z _0 + t z $ gives
  \begin{align*}
    g (z)
    &= g ( z _0 ) + \int _0 ^1 \frac{\mathrm{d}}{\mathrm{d}t} g \bigl( ( 1 - t ) z _0 + t z  \bigr) \,\mathrm{d}t \\
    &= g ( z _0 ) + \biggl[ \int _0 ^1 g ^\prime \bigl( ( 1 - t ) z _0 + t z \bigr) \,\mathrm{d}t \biggr] ( z - z _0 ) ;
  \end{align*}
  see~\citet[Proposition 2.4.7]{AbMaRa1988}. Let
  $ B (z) = \int _0 ^1 g ^\prime \bigl( ( 1 - t ) z _0 + t z \bigr)
  \,\mathrm{d}t \in L ( Z, Z ) $, which allows the above to be written
  as
  \begin{equation*}
    g (z) = g ( z _0 ) + B (z) ( z - z _0 ) .
  \end{equation*}
  (This is essentially the $ C ^1 $ case of Taylor's Theorem applied
  to $g$.) Taking $ z _0 = F ( y _0 ) $ and $ z = F (y) $, then
  applying the $F$-relatedness of $f$ and $g$, yields
  \eqref{e:projectability_condition}.

  $ ( \Leftarrow ) $ Conversely, if \eqref{e:projectability_condition}
  holds, then define $g$ by
  \begin{equation*}
    g (z) = F ^\prime ( y _0 ) f ( y _0 ) + B (z) \bigl( z - F ( y _0 ) \bigr) .
  \end{equation*}
  Then \eqref{e:projectability_condition} immediately implies that
  $ F ^\prime (y) f (y) = g \bigl( F (y) \bigr) $, i.e., $f$ is
  $F$-related to $g$.
\end{proof}

Assuming $ f (0) = 0 $, applying Taylor's Theorem as above with
$ y _0 = 0 $ lets us write
\begin{equation*}
  f (y) = A (y) y,
\end{equation*}
where $ A \colon Y \rightarrow L ( Y , Y ) $. If additionally
$ F (0) = 0 $, then \cref{prop:projectability} implies that
an $F$-related vector field $g$ has the form
\begin{equation*}
  g (z) = B (z) z .
\end{equation*}
We now specialize to the case where $A$ factors through $F$, i.e.,
$ A = \alpha \circ F $ for some
$ \alpha \colon Z \rightarrow L ( Y , Y ) $. In this case, we
establish a sufficient condition for $F$-projectability, with
additional consequences when $\alpha$ takes values in a Jordan
operator algebra on $Y$.

\begin{theorem}
  \label{t:operator_projectable}
  Let $ f (y) = \alpha \bigl( F (y) \bigr) y $, where
  $ \alpha \colon Z \rightarrow \mathcal{A} \subset L ( Y , Y ) $.

  \begin{enumerate}[label=\textup{(\alph*)}]
  \item If there exists
    $ \beta \colon \mathcal{A} \rightarrow L ( Z, Z ) $ such that
    \begin{equation}
      \label{e:beta_condition}
      F ^\prime (y) T y = \beta (T) F (y) ,
    \end{equation}
    for all $ y \in Y $ and $ T \in \mathcal{A} $, then $f$ is
    $F$-related to $ g (z) = \beta \bigl( \alpha (z) \bigr) z
    $. \label{i:beta}

  \item Additionally, if $ \mathcal{A} $ is a Jordan operator algebra
    on $Y$, i.e., a subspace of $ L ( Y, Y ) $ closed under the Jordan
    product $ S \bullet T \coloneqq \frac{1}{2} ( S T + T S ) $, then
    it follows that
    $ F ^{ \prime \prime } (y) \bigl( f (y) , f (y) \bigr) = \gamma
    \bigl( F (y) \bigr) $, where \label{i:gamma}
    \begin{equation*}
      \gamma (z) = \Bigl[ \beta \bigl( \alpha (z) \bigr) ^2 - \beta \bigl( \alpha (z) ^2 \bigr) \Bigr] z .
    \end{equation*}
  \end{enumerate} 
\end{theorem}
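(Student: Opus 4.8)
The plan is to obtain \ref{i:beta} by a direct substitution and \ref{i:gamma} by differentiating the defining identity \eqref{e:beta_condition} exactly once, keeping $T$ fixed and only substituting $T = \alpha\bigl(F(y)\bigr)$ at the very end. For \ref{i:beta}: fix $y$ and set $T \coloneqq \alpha\bigl(F(y)\bigr) \in \mathcal{A}$, which is a fixed element of $\mathcal{A}$. Since $f(y) = \alpha\bigl(F(y)\bigr)y = Ty$, \eqref{e:beta_condition} gives $F'(y)f(y) = F'(y)Ty = \beta(T)F(y) = \beta\bigl(\alpha(F(y))\bigr)F(y)$, which is precisely $g\bigl(F(y)\bigr)$ for $g(z) = \beta\bigl(\alpha(z)\bigr)z$; hence $f$ and $g$ are $F$-related.

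For \ref{i:gamma}, I would use that \eqref{e:beta_condition} holds for each fixed $T \in \mathcal{A}$ and all $y \in Y$, so it may be differentiated in $y$ with $T$ held constant. Differentiating $F'(y)Ty = \beta(T)F(y)$ in an arbitrary direction $w$ gives
\[
  F''(y)(w, Ty) + F'(y)Tw = \beta(T)\,F'(y)w .
\]
Taking $w = Ty$, so that $Tw = T^2 y$, this becomes
\[
  F''(y)(Ty, Ty) + F'(y)\bigl(T^2 y\bigr) = \beta(T)\,F'(y)(Ty) .
\]
On the right-hand side, \eqref{e:beta_condition} gives $F'(y)(Ty) = \beta(T)F(y)$, so the term equals $\beta(T)^2 F(y)$. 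For the left-hand side, the Jordan hypothesis enters here, and only here: $T^2 = T\bullet T \in \mathcal{A}$, so \eqref{e:beta_condition} applies with $T^2$ in place of $T$, giving $F'(y)\bigl(T^2 y\bigr) = \beta\bigl(T^2\bigr)F(y)$. Combining,
\[
  F''(y)(Ty, Ty) = \Bigl[\beta(T)^2 - \beta\bigl(T^2\bigr)\Bigr] F(y) ,
\]
and substituting $T = \alpha\bigl(F(y)\bigr)$ together with $f(y) = Ty$ yields $F''(y)\bigl(f(y),f(y)\bigr) = \bigl[\beta(\alpha(F(y)))^2 - \beta(\alpha(F(y))^2)\bigr]F(y) = \gamma\bigl(F(y)\bigr)$, as claimed.

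The one delicate point is the order of operations in \ref{i:gamma}: the identity \eqref{e:beta_condition} must be differentiated in $y$ with $T$ frozen, and the $y$-dependent choice $T = \alpha\bigl(F(y)\bigr)$ substituted only afterward --- differentiating after substitution would produce extra terms coming from $\tfrac{\mathrm{d}}{\mathrm{d}y}\alpha\bigl(F(y)\bigr)$ and would not close up. Everything else is bookkeeping: the Jordan closure is needed precisely to make sense of reusing \eqref{e:beta_condition} for $T^2$, which is what forces the ``associator-type'' combination $\beta(T)^2 - \beta(T^2)$ to appear in $\gamma$. (I note in passing that this argument does not actually require $F$ to be quadratic, only $C^2$, since it uses \eqref{e:beta_condition} and a single differentiation of it.)
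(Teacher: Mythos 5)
Your proposal is correct and follows essentially the same argument as the paper: direct substitution of $T=\alpha\bigl(F(y)\bigr)$ for \ref{i:beta}, and differentiating \eqref{e:beta_condition} in $y$ with $T$ held fixed, using Jordan closure to apply \eqref{e:beta_condition} to $T^2$, for \ref{i:gamma}. Your remarks about freezing $T$ before differentiating and about $F$ only needing to be $C^2$ are accurate and consistent with the paper's setting, which indeed states the result for arbitrary $C^2$ maps $F$.
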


\begin{proof}
  From \eqref{e:beta_condition} with
  $ T = \alpha \bigl( F (y) \bigr) $, we immediately get
  \begin{equation*}
    F ^\prime (y) f (y) = F ^\prime (y) \alpha \bigl( F (y) \bigr) y = \beta \Bigl( \alpha \bigl( F (y) \bigr) \Bigr) F (y) = g \bigl( F (y) \bigr),
  \end{equation*}
  which proves \ref{i:beta}. Next, differentiating
  \eqref{e:beta_condition} with respect to $y$ in the direction
  $ T y $ gives
  \begin{equation*}
    F ^{ \prime \prime } (y) ( T y, T y ) + F ^\prime (y) T ^2 y = \beta (T) F ^\prime (y) T y .
  \end{equation*}
  By \eqref{e:beta_condition}, the right-hand side equals
  $ \beta (T) ^2 F (y) $. Furthermore, if $\mathcal{A}$ is closed
  under the Jordan product, then in particular $ T \in \mathcal{A} $
  implies $ T ^2 = T \bullet T \in \mathcal{A} $, so another
  application of \eqref{e:beta_condition} shows that the second term
  on the left-hand side equals $ \beta ( T ^2 ) F (y) $. Altogether,
  \begin{equation*}
    F ^{ \prime \prime } (y) ( T y , T y ) = \bigl[ \beta ( T ) ^2 - \beta ( T ^2 ) \bigr] F (y) ,
  \end{equation*}
  and taking $ T = \alpha \bigl( F (y) \bigr) $ proves \ref{i:gamma}.
\end{proof}

\begin{corollary}
  \label{c:sydirk_alpha_beta}
  Let $ F \colon Y \rightarrow Z $ be quadratic, and let $\alpha$ and
  $\beta$ be as in \cref{t:operator_projectable}, where
  $\mathcal{A}$ is a Jordan operator algebra on $Y$. Then an $s$-stage
  SyDIRK method for $ \dot{y} = \alpha \bigl( F (y) \bigr) y $
  descends to the following method for
  $ \dot{z} = \beta \bigl( \alpha (z) \bigr) z $:
  \begin{align*}
    Z _i
    &= z _0
      \begin{aligned}[t]
        &+ h \sum _{ j = 1 } ^{i-1} b _j \beta \bigl( \alpha  ( Z _j ) \bigr) Z _j + \frac{ h }{ 2 } b _i \beta \bigl( \alpha ( Z _i ) \bigr) Z _i \\
        &- \frac{ h ^2 }{ 8 } b _i ^2 \Bigl[ \beta \bigl( \alpha ( Z _i ) \bigr) ^2 - \beta \bigl( \alpha ( Z _i ) ^2 \bigr) \Bigr] Z _i ,
      \end{aligned}
    \\
    z _1
    &= z _0 + h \sum _{ i = 1 } ^s b _i \beta  \bigl( \alpha ( Z _i ) \bigr) Z _i .
  \end{align*}
\end{corollary}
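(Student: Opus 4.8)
The plan is to deduce this corollary directly from \cref{t:operator_projectable} and \cref{c:sydirk_descends}; the argument is essentially a substitution, so I do not expect any genuine obstacle. First I would invoke \cref{t:operator_projectable}\ref{i:beta}, whose hypothesis \eqref{e:beta_condition} is among the standing assumptions here, to conclude that the vector field $ f (y) = \alpha \bigl( F (y) \bigr) y $ is $F$-related to $ g (z) = \beta \bigl( \alpha (z) \bigr) z $, i.e.\ $ F ^\prime (y) f (y) = g \bigl( F (y) \bigr) $ for all $ y \in Y $. Next, since $\mathcal{A}$ is assumed to be a Jordan operator algebra, \cref{t:operator_projectable}\ref{i:gamma} applies and yields $ F ^{ \prime \prime } (y) \bigl( f (y) , f (y) \bigr) = \gamma \bigl( F (y) \bigr) $ with $ \gamma (z) = \bigl[ \beta \bigl( \alpha (z) \bigr) ^2 - \beta \bigl( \alpha (z) ^2 \bigr) \bigr] z $.

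With $F$ quadratic and these two identities established, the hypotheses of \cref{c:sydirk_descends} hold verbatim, so an $s$-stage SyDIRK method for $ \dot{y} = f (y) $ descends to the method \eqref{e:sydirk_reduced} for $ \dot{z} = g (z) $. It then remains only to substitute the explicit expressions $ g (z) = \beta \bigl( \alpha (z) \bigr) z $ and $ \gamma (z) = \bigl[ \beta \bigl( \alpha (z) \bigr) ^2 - \beta \bigl( \alpha (z) ^2 \bigr) \bigr] z $ into \eqref{e:sydirk_reduced_stages} and \eqref{e:sydirk_reduced_step}, which reproduces the two displayed equations in the statement.

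The only point that requires attention—hardly an obstacle—is confirming that the hypotheses line up correctly: \cref{c:sydirk_descends} needs $F$ quadratic (assumed), a vector field $g$ on $Z$ with $ F ^\prime (y) f (y) = g \bigl( F (y) \bigr) $ (supplied by part~\ref{i:beta}), and a vector field $\gamma$ on $Z$ with $ F ^{ \prime \prime } (y) \bigl( f (y) , f (y) \bigr) = \gamma \bigl( F (y) \bigr) $ (supplied by part~\ref{i:gamma}, which is precisely where the Jordan-algebra assumption on $\mathcal{A}$ is used). Once these are in place, the descended scheme is read off directly from \eqref{e:sydirk_reduced}.
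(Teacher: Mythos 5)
Your proposal is correct and matches the paper's proof, which likewise deduces the corollary by combining \cref{t:operator_projectable} with \cref{c:sydirk_descends} and substituting the resulting expressions for $g$ and $\gamma$ into \eqref{e:sydirk_reduced}. You simply spell out the hypothesis-checking that the paper leaves implicit.
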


\begin{proof}
  This follows immediately from \cref{t:operator_projectable}
  together with \cref{c:sydirk_descends}.
\end{proof}

\begin{example}[matrix reduction revisited]
  \label{ex:matrix_revisited}
  As in \cref{ex:lie-poisson}, let
  $ Y = \mathbb{R} ^{ n \times n } \times \mathbb{R} ^{ n \times n } $
  and $ Z = \mathbb{R} ^{ n \times n } $ with $ F ( q, p ) = q ^T p $.
  Let $\mathcal{A}$ consist of linear operators
  $ T _{ M, N } ( q, p ) \coloneqq ( q M, p N ) $ for
  $ M, N \in \mathbb{R} ^{ n \times n } $, i.e., $Y$ acting on itself
  by right multiplication. Then
  \begin{align*}
    F ^\prime ( q, p ) T _{M, N} ( q, p ) = ( q M ) ^T p + q ^T ( p N ) = M ^T (q ^T p) + (q ^T p) N .
  \end{align*}
  Hence, \eqref{e:beta_condition} holds with
  $ \beta ( T _{M, N} ) z = M ^T z + z N $, and furthermore,
  \begin{multline*}
    \bigl[ \beta ( T _{M, N} ) ^2 - \beta ( T _{M, N} ^2 ) \bigr] z \\
    \begin{aligned}
      &= \bigl[ M ^T ( M ^T z + z N ) + ( M ^T z + z N ) N \bigr] - \bigl[ ( M ^2 ) ^T z + z N ^2 \bigr] \\
      &= 2 M ^T z N .
    \end{aligned}
  \end{multline*}
  In particular, when $ N = - M ^T $, we have
  \begin{align*}
    \beta ( T _{M, -M ^T} ) z &= M ^T z - z M ^T = \operatorname{ad} ^\ast _M z ,\\
    \bigl[ \beta ( T _{M, -M^T} ) ^2 - \beta ( T _{M, -M^T} ^2 ) \bigr] z &= - 2 M ^T z M ^T .
  \end{align*}
  This generalizes the reduction procedure in
  \cref{ex:lie-poisson} for the special case
  $ M = \nabla \eta (z) $, i.e.,
  $ \alpha (z) = T _{ \nabla \eta (z), - \nabla \eta (z) ^T } $.
\end{example}

\begin{example}[Lie--Poisson integrators on $\mathbb{R}^3$ via the Hopf map]
  \label{ex:quaternions}
  \citet*{McMoVe2014e} developed \emph{collective Lie--Poisson
    integrators} on $\mathbb{R}^3$ by lifting to $\mathbb{R} ^4 $ and
  then applying symplectic Runge--Kutta methods. We now show how this
  fits into our framework, and how---when the integrator on
  $ \mathbb{R} ^4 $ is a SyDIRK method---one may integrate directly on
  $ \mathbb{R}^3 $ without computing the lifted trajectory on
  $ \mathbb{R}^4 $.
  
  Let $ Y = \mathbb{H} \cong \mathbb{R} ^4 $ and
  $ Z = \operatorname{Im} \mathbb{H} \cong \mathbb{R}^3 $ be the real
  vector spaces of quaternions and pure imaginary quaternions,
  respectively, and define $ F \colon Y \rightarrow Z $ by
  $ F (y) = \frac{ 1 }{ 4 } y k y ^\ast $. (The classical Hopf
  fibration map is $ 4 F $.) Here, the star denotes quaternion
  conjugation, where $ y = y _0 + y _1 i + y _2 j + y _3 k $ has
  conjugate $ y ^\ast = y _0 - y _1 i - y _2 j - y _3 k $. Letting
  $\mathcal{A} $ consist of left-multiplication operators
  $ L _x y \coloneqq x y $, it follows that
  \begin{equation*}
    F ^\prime (y) L _x y = \frac{1}{4} ( x y ) k y ^\ast + \frac{1}{4} y k (x y) ^\ast = x F (y) + F (y) x ^\ast .
  \end{equation*}
  Hence, \eqref{e:beta_condition} holds with
  $ \beta ( L _x ) z = x z + z x ^\ast $. Furthermore,
  \begin{equation}
    \label{e:gamma_quaternions}
    \bigl[ \beta ( L _x ) ^2 - \beta ( L _x ^2 ) \bigr] z = \bigl[ x ( x z + z x ^\ast ) + ( x z + z x ^\ast ) x ^\ast \bigr] - \bigl[ (x x) z + z (x ^\ast x ^\ast) \bigr] = 2 x z x ^\ast .
  \end{equation}
  Now, given a reduced Hamiltonian $\eta$ on $Z$, define
  $ \alpha (z) = L _{ - \frac{1}{2} \nabla \eta (z) } $. Then
  \begin{equation*}
    f (y) = \alpha \bigl( F (y) \bigr) y = - \frac{1}{2} \nabla \eta \bigl( F (y) \bigr) y ,
  \end{equation*}
  is $F$-related to
  \begin{equation*}
    g (z) = \beta \bigl( \alpha (z) \bigr) z = \frac{1}{2} \bigl(  z \nabla \eta (z) - \nabla \eta (z) z \bigr) = z \times \nabla \eta (z) ,
  \end{equation*}
  where the last expression relates the commutator on
  $ \operatorname{Im} \mathbb{H} $ to the cross product on
  $\mathbb{R}^3$. By a similar calculation to that in \citep[Appendix
  B]{McMoVe2014e}, $f$ is the Hamiltonian vector field for the
  \emph{collective Hamiltonian} $ H = \eta \circ F $ on $Y$, while $g$
  is the reduced Hamiltonian vector field for $\eta$ on $Z$. Moreover,
  from \eqref{e:gamma_quaternions} with
  $ x = - \frac{1}{2} \nabla \eta (z) $, we get
  \begin{align*}
    \gamma (z) = - \frac{1}{2} \nabla \eta (z) z \nabla \eta (z) = \bigl( z \times \nabla \eta (z) \bigr) \times \nabla \eta (z) + \frac{1}{2} z \bigl\lvert \nabla \eta (z) \bigr\rvert ^2 ,
  \end{align*}
  again relating multiplication in $ \operatorname{Im} \mathbb{H} $ to
  the dot and cross products on $\mathbb{R}^3$.

  Therefore, by \cref{c:sydirk_alpha_beta}, a SyDIRK
  method for the collective Hamiltonian on $\mathbb{R} ^4 $ descends
  to the following Lie--Poisson method for the reduced Hamiltonian on
  $ \mathbb{R} ^3 $:
  \begin{align*}
    Z _i
    &= z _0
      \begin{aligned}[t]
        &+ h \sum _{ j = 1 } ^{i-1} b _j Z _j \times \nabla \eta ( Z _j ) + \frac{ h }{ 2 } b _i Z _i \times \nabla \eta ( Z _i ) \\
        &- \frac{ h ^2 }{ 8 } b _i ^2 \Bigl[ \bigl( Z _i \times \nabla \eta (Z _i) \bigr) \times \nabla \eta (Z _i) + \frac{1}{2} Z _i \bigl\lvert \nabla \eta (Z _i) \bigr\rvert ^2 \Bigr] ,
      \end{aligned}
    \\
    z _1
    &= z _0 + h \sum _{ i = 1 } ^s b _i Z _i \times \nabla \eta ( Z _i ) .
  \end{align*}
\end{example}

\Cref{ex:matrix_revisited,ex:quaternions} are
special cases of a more general one, where $Y$ is itself an algebra
and $\mathcal{A}$ consists of left- or right-multiplication
operators. The following result is contained in \citet[Theorem
3]{Albert1949}, but the proof is sufficiently simple that we include
it here.

\begin{lemma}
  Given an algebra $Y$, let $ L _x $ and $ R _x $ denote left- and
  right-multiplication, respectively, by $ x \in Y $. That is,
  $ L _x y \coloneqq x y $ and $ R _x y \coloneqq y x $ for
  $ y \in Y $.
  \begin{enumerate}[label=\textup{(\alph*)}]
  \item If $Y$ is \emph{left alternative}, i.e.,
    $ ( L _x )^2 = L _{ x ^2 } $ for all $ x \in Y $, then the
    left-multiplication operators
    $ \mathcal{L} (Y) \coloneqq \{ L _x : x \in Y \} $ form a Jordan
    operator algebra.\label{i:left-alternative}

  \item If $Y$ is \emph{right alternative}, i.e.,
    $ ( R _x ) ^2 = R _{ x ^2 } $ for all $ x \in Y $, then the
    right-multiplication operators
    $ \mathcal{R} (Y) \coloneqq \{ R _x : x \in Y \} $ form a Jordan
    operator algebra.\label{i:right-alternative}
  \end{enumerate} 
\end{lemma}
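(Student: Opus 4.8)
The plan is to prove part~\ref{i:left-alternative} in full; part~\ref{i:right-alternative} then follows by the verbatim argument with left and right multiplication interchanged. Recall that a Jordan operator algebra is a \emph{subspace} of $L(Y,Y)$ that is closed under $S \bullet T = \tfrac{1}{2}(ST+TS)$, so there are exactly two things to check for $\mathcal{L}(Y)$: that it is a linear subspace, and that it is closed under $\bullet$. The first is immediate: since the algebra multiplication on $Y$ is bilinear, the map $x \mapsto L _x$ is linear ($L _{x+x'} = L _x + L _{x'}$ and $L _{\lambda x} = \lambda L _x$), so $\mathcal{L}(Y)$, being its image, is a subspace.

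The substantive step is closure under the Jordan product, and for this I would polarize the left-alternative identity $(L _x)^2 = L _{x^2}$. Substituting $x+y$ for $x$ and expanding both sides---using linearity of $x \mapsto L _x$ on the left and bilinearity of multiplication on the right (note that no commutativity is invoked when expanding $(x+y)^2$)---gives
$$ L _x ^2 + L _x L _y + L _y L _x + L _y ^2 = L _{x^2} + L _{xy} + L _{yx} + L _{y^2} . $$
Cancelling $L _x ^2 = L _{x^2}$ and $L _y ^2 = L _{y^2}$ leaves the operator identity $L _x L _y + L _y L _x = L _{xy+yx}$, whence
$$ L _x \bullet L _y = \tfrac{1}{2}\bigl(L _x L _y + L _y L _x\bigr) = \tfrac{1}{2} L _{xy+yx} = L _{\frac{1}{2}(xy+yx)} \in \mathcal{L}(Y) . $$
This proves part~\ref{i:left-alternative}; in fact it shows that $x \mapsto L _x$ is a homomorphism from $Y$, equipped with its own Jordan product $x \bullet y = \tfrac{1}{2}(xy+yx)$, onto $\mathcal{L}(Y)$. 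Repeating the computation with $R$ in place of $L$, using that $x \mapsto R _x$ is linear and $(R _x)^2 = R _{x^2}$, yields $R _x R _y + R _y R _x = R _{xy+yx}$ and hence part~\ref{i:right-alternative}.

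I do not expect any genuine obstacle: the argument is a one-line polarization, and its only mild subtlety is that it divides by $2$, which is harmless over $\mathbb{R}$ or $\mathbb{C}$ (and in fact the polarized identity $L _x L _y + L _y L _x = L _{xy+yx}$ is derived with no division at all, the factor $\tfrac{1}{2}$ entering only through the definition of $\bullet$). It is worth remarking that neither associativity nor commutativity of $Y$ is needed---only one-sided alternativity---so the lemma applies to genuinely nonassociative examples such as the quaternions, as well as to any associative algebra acting on itself by left or right multiplication.
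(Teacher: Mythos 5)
Your proof is correct and takes essentially the same route as the paper's: both arguments polarize the one-sided alternative identity to obtain $L_x L_y + L_y L_x = L_{xy+yx}$ (equivalently $R_x R_y + R_y R_x = R_{xy+yx}$), the paper via the difference $\tfrac{1}{4}\bigl[(R_{x+y})^2 - (R_{x-y})^2\bigr]$ and you via expanding $(L_{x+y})^2$ and cancelling the square terms, which is only a cosmetic difference. One small slip in your closing aside: the quaternions are associative, so the genuinely nonassociative example the lemma is meant to cover is the octonions.
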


\begin{proof}
  Statements \ref{i:left-alternative} and \ref{i:right-alternative}
  are clearly equivalent, so we prove only \ref{i:right-alternative},
  following \citep{Albert1949}. The right-alternativity condition
  implies $ ( R _{ x \pm y } ) ^2 = R _{ ( x \pm y ) ^2 } $, and
  therefore,
  \begin{equation*}
    \frac{1}{2} ( R _x R _y + R _y R _x ) = \frac{ 1 }{ 4 } \bigl[ ( R _{x + y} ) ^2 - ( R _{ x - y } ) ^2 \bigr] = R _{ \frac{ 1 }{ 4 } [ ( x + y ) ^2 - ( x - y ) ^2 ] } = R _{ \frac{1}{2} ( x y + y x ) } .
  \end{equation*}
  Hence, $ \mathcal{R} (Y) $ is closed under the Jordan product.
\end{proof}

\begin{example}[octonions]
  Let $ Y = \mathbb{O} \cong \mathbb{R} ^8 $ be the algebra of
  octonions, which is (both left and right) alternative but not
  associative. Using the Cayley--Dickson construction, we identify
  $ \mathbb{O} \cong \mathbb{H} \oplus \mathbb{H} $ with product
  $ ( q, p ) ( r, s ) \coloneqq ( q r - s ^\ast p , s q + p r
  ^\ast ) $ and conjugation
  $ ( q, p ) ^\ast \coloneqq ( q ^\ast , - p ) $. Now, define $ F \colon Y \rightarrow Z = \mathbb{R}  $ by
  \begin{equation*}
    F (y) =  \lvert y \rvert ^2 \coloneqq y y ^\ast .
  \end{equation*}
  Letting $\mathcal{A} = \mathcal{L} (Y) $, as in
  \cref{ex:quaternions}, we have
  \begin{equation*}
    F ^\prime (y) L _x y = ( x y ) y ^\ast + y ( x y ) ^\ast = x F (y) + F (y) x ^\ast ,
  \end{equation*}
  where it can be shown that alternativity of the product implies
  $ ( x y ) y ^\ast = x ( y y ^\ast ) $ and
  $ y (y ^\ast x ^\ast ) = ( y y ^\ast ) x ^\ast $. Hence,
  \eqref{e:beta_condition} again holds with
  $ \beta ( L _x ) z = x z + z x ^\ast $. Furthermore, the calculation
  in \eqref{e:gamma_quaternions} is still valid for octonions, by
  alternativity, so we again have
  \begin{equation*}
    \bigl[ \beta ( L _x ) ^2 - \beta ( L _x ^2 ) \bigr] z = 2 x z x ^\ast.
  \end{equation*} 
  Since $ z \in \mathbb{R} $ commutes with every $ x \in \mathbb{O} $,
  we may simply write these as
  \begin{equation*}
    \beta ( L _x ) z = 2 \operatorname{Re} (x) z, \qquad \bigl[ \beta ( L _x ) ^2 - \beta ( L _x ^2 ) \bigr] z = 2 \lvert x \rvert ^2 z  ,
  \end{equation*}
  where $ \operatorname{Re} (x) \coloneqq ( x + x ^\ast ) / 2 $ is the
  real part of $x \in \mathbb{O} $.

  Finally, if we take $ \alpha (z) = L _{ \frac{1}{2} a (z) } $ for
  some $ a \colon Z \rightarrow Y $, then
  $ f (y) = \frac{1}{2} a \bigl( F (y) \bigr) y $ is $F$-related to
  $ g (z) = \operatorname{Re} \bigl( a (z) \bigr) z $, and we have
  $ \gamma (z) = \frac{1}{2} \bigl\lvert a (z) \bigr\rvert ^2 z
  $. Hence, a SyDIRK method for $\dot{y} = f (y) $ on $ \mathbb{O} $
  descends to the following method for $\dot{z} = g (z) $ on
  $\mathbb{R}$:
  \begin{align*}
    Z _i
    &= z _0 + h \sum _{ j = 1 } ^{i-1} b _j \operatorname{Re} \bigl( a (Z _j ) \bigr) Z _j + \frac{ h }{ 2 } b _i \operatorname{Re} \bigl( a (Z _i ) \bigr) Z _i - \frac{ h ^2 }{ 16 } b _i ^2  \bigl\lvert a (Z _i) \bigr\rvert ^2 Z _i, \\
    z _1
    &= z _0 + h \sum _{ i = 1 } ^s b _i  \operatorname{Re} \bigl( a (Z _i) \bigr) Z _i.
  \end{align*}
\end{example}

\subsection{Lie algebra actions with invariant bilinear forms}
\label{sec:lie}

We now consider quadratic observables arising from Lie algebra actions
with invariant bilinear forms, for which we can prove an analog of
\cref{t:operator_projectable} without the Jordan operator
algebra assumption. This is an important setting that includes
momentum maps in Hamiltonian mechanics.

Let $\mathfrak{g}$ be a Lie algebra acting linearly on $Y$. We can
identify this with a subalgebra
$ \mathfrak{g} \subset \mathfrak{gl} (Y) $, inheriting the commutator
bracket $ [ S, T ] = S T - T S $ and acting via $ y \mapsto T y
$. Suppose $\omega$ is a bilinear form on $Y$ invariant under the
$\mathfrak{g}$-action, in the sense that
\begin{equation*}
  \omega ( T x , y ) + \omega ( x , T y ) = 0,
\end{equation*}
for all $ T \in \mathfrak{g} $ and $ x, y \in Y $. For example, if
$\omega$ is a symplectic form, then this condition means that
$ \mathfrak{g} $ is a subalgebra of the symplectic Lie algebra
$ \mathfrak{sp} (Y) $; if $\omega$ is an inner product, then
$\mathfrak{g}$ is a subalgebra of the special orthogonal Lie algebra
$ \mathfrak{so} (Y) $.

Next, define the quadratic map $ F \colon Y \rightarrow \mathfrak{g}^{\ast} $ by
\begin{equation*}
  \bigl\langle F (y), T \bigr\rangle \coloneqq \frac{1}{2} \omega ( T y, y ) .
\end{equation*}
This is called a \emph{momentum map} of the $\mathfrak{g}$-action in
the case where $\omega$ is a symplectic form on $Y$. We now obtain a
projectability result for this particular form of observable.

\begin{theorem}
  \label{t:quadratic_lie}
  If $ \alpha \colon \mathfrak{g}^{\ast} \rightarrow \mathfrak{g} $,
  then $ f (y) = \alpha \bigl( F (y) \bigr) y $ is $F$-related to
  $ g (z) = \beta \bigl( \alpha (z) \bigr) z $, where
    \begin{equation*}
      \bigl\langle \beta (T) z, S \bigr\rangle = \bigl\langle z , [S, T ] \bigr\rangle ,
    \end{equation*}
    i.e.,
    $ g (z) = - \operatorname{ad} _{ \alpha
      (z) } ^\ast z $.  Additionally, if $\mathfrak{g}$ is closed
    under $ ( S, T ) \mapsto U _T S \coloneqq T S T $, then
    $ F ^{ \prime \prime } \bigl( f (y) , f (y) \bigr) = \gamma \bigl(
    F (y) \bigr) $, where
    \begin{equation*}
      \bigl\langle \gamma (z) , S \bigr\rangle = - 2 \bigl\langle z , \alpha (z) S \alpha (z) \bigr\rangle ,
    \end{equation*}
    i.e., $ \gamma (z) = - 2 U _{ \alpha (z) } ^\ast z $.
\end{theorem}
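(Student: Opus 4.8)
The plan is to mirror the proof of \cref{t:operator_projectable}, now with $\mathcal{A} = \mathfrak{g}$ and $\alpha \colon \mathfrak{g}^{\ast} \to \mathfrak{g}$; the one substantive difference is that $\mathfrak{g}$ is closed under the commutator $[S,T] = ST - TS$ rather than under the Jordan product, so the step that previously used $T^2 \in \mathcal{A}$ must be handled differently.

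First I would compute $F'$. Differentiating $\langle F(y), S \rangle = \tfrac12 \omega(Sy, y)$ gives $\langle F'(y) v, S \rangle = \tfrac12\bigl[ \omega(Sv, y) + \omega(Sy, v) \bigr]$. Putting $v = Ty$ for $T \in \mathfrak{g}$ and using $\mathfrak{g}$-invariance of $\omega$ on the second term — $\omega(Sy, Ty) = -\omega(TSy, y)$ — yields
\[
  \langle F'(y)(Ty), S \rangle = \tfrac12 \omega\bigl( [S,T] y, y \bigr) = \langle F(y), [S,T] \rangle ,
\]
that is, $F'(y)(Ty) = \beta(T) F(y)$ with $\langle \beta(T) z, S \rangle = \langle z, [S,T] \rangle$. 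This is exactly the analog of \eqref{e:beta_condition}, so the argument of \cref{t:operator_projectable}\ref{i:beta} gives part (a), with $g(z) = \beta\bigl(\alpha(z)\bigr) z$; and since $\langle \beta(\alpha(z)) z, S \rangle = \langle z, [S, \alpha(z)] \rangle = -\langle \operatorname{ad}^{\ast}_{\alpha(z)} z, S \rangle$, we recognize $g(z) = -\operatorname{ad}^{\ast}_{\alpha(z)} z$.

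For part (b), I would differentiate the identity $F'(y)(Ty) = \beta(T) F(y)$ with respect to $y$ in the direction $Ty$, exactly as in \cref{t:operator_projectable}, to get
\[
  F''(y)(Ty, Ty) = \beta(T)^2 F(y) - F'(y)\bigl(T^2 y\bigr) .
\]
Since $T^2$ need not belong to $\mathfrak{g}$, we cannot replace $F'(y)(T^2 y)$ by $\beta(T^2) F(y)$; instead I would pair both surviving terms against an arbitrary $S \in \mathfrak{g}$ and expand through $\omega$. On one hand, $\langle \beta(T)^2 F(y), S \rangle = \langle F(y), [[S,T],T] \rangle = \tfrac12 \omega\bigl( (S T^2 - 2 T S T + T^2 S) y, y \bigr)$. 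On the other hand, symmetrizing $\omega(Sy, T^2 y)$ into $\omega(T^2 S y, y)$ by two applications of invariance gives $\langle F'(y)(T^2 y), S \rangle = \tfrac12 \omega\bigl( (S T^2 + T^2 S) y, y \bigr)$. Subtracting, the $S T^2$ and $T^2 S$ terms cancel and
\[
  \langle F''(y)(Ty, Ty), S \rangle = -\omega\bigl( T S T y, y \bigr) = -2 \langle F(y), T S T \rangle ,
\]
where the last equality uses the hypothesis that $\mathfrak{g}$ is closed under $U_T S = TST$, so that $\langle F(y), TST \rangle$ is a bona fide pairing of $\mathfrak{g}^{\ast}$ with $\mathfrak{g}$. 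Taking $T = \alpha\bigl(F(y)\bigr)$ identifies the right-hand side with $\langle \gamma(F(y)), S \rangle$ for $\gamma(z) = -2 U^{\ast}_{\alpha(z)} z$, which is the claim.

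The only real obstacle is the $T^2$ term: the point is that, after unpacking through $\omega$ and symmetrizing with invariance, $F'(y)(T^2 y)$ contributes exactly the $S T^2 + T^2 S$ part of $[[S,T],T]$, so the cancellation leaving $-2TST$ is forced, and the closure assumption on $\mathfrak{g}$ is needed only to keep $TST$ inside $\mathfrak{g}$. Everything else is the computation of \cref{t:operator_projectable} carried over verbatim, and the places to verify carefully are the two invariance applications in the symmetrization and the expansion $[[S,T],T] = S T^2 - 2 T S T + T^2 S$, where a sign could easily slip.
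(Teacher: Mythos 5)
Your proposal is correct, and part (a) coincides with the paper's argument verbatim: differentiate $\langle F(y),S\rangle=\tfrac12\omega(Sy,y)$, insert $v=Ty$, and use invariance once to obtain $\langle F(y),[S,T]\rangle=\langle\beta(T)F(y),S\rangle$. For part (b) you take a genuinely different (though equivalent) route. The paper exploits the fact that $F$ is quadratic to write $\langle F''(Ty,Ty),S\rangle=\omega(STy,Ty)$ directly from the explicit formula for $F$, and then a single application of invariance gives $-\omega(TSTy,y)=-2\langle F(y),TST\rangle$; it never introduces $\beta(T)^2$ or the $T^2$ term at all. You instead differentiate the identity $F'(y)(Ty)=\beta(T)F(y)$ in the direction $Ty$, exactly as in the proof of \cref{t:operator_projectable}, and then eliminate the problematic $F'(y)(T^2y)$ term by unpacking both $\langle F(y),[[S,T],T]\rangle=\tfrac12\omega((ST^2-2TST+T^2S)y,y)$ and $\langle F'(y)(T^2y),S\rangle=\tfrac12\omega((ST^2+T^2S)y,y)$ through $\omega$ and cancelling; the signs and the two invariance applications check out, and the closure of $\mathfrak g$ under $U_T$ is invoked only where the paper also needs it, namely to make $\langle F(y),TST\rangle$ a legitimate pairing and $\gamma(z)=-2U^{\ast}_{\alpha(z)}z$ well defined. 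Your derivation is longer but makes the analogy with \eqref{e:beta_condition} and \cref{t:operator_projectable} explicit and explains precisely why the Jordan-product closure can be dropped here, whereas the paper's computation is shorter because it bypasses the differentiated identity entirely and works straight from the constant bilinear map $F''$.
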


\begin{proof}
  Observe that for all $ S, T \in \mathfrak{g} $,
  \begin{align*} 
    \bigl\langle F ^\prime (y) T y , S \bigr\rangle
    &= \frac{1}{2} \bigl[ \omega ( S T y, y ) + \omega ( S y, T y ) \bigr] \\
    &= \frac{1}{2} \bigl[ \omega ( S T y, y ) - \omega ( T S y, y ) \bigr]\\
    &= \frac{1}{2} \omega \bigl( [ S, T ] y, y \bigr) \\
    &= \bigl\langle F (y) , [S, T] \bigr\rangle \\
    &= \bigl\langle \beta (T) F (y) , S \bigr\rangle .
  \end{align*}
  Hence, $F$-relatedness of $f$ and $g$ follows by taking
  $ T = \alpha \bigl( F (y) \bigr) $. Note that this is essentially a
  version of the argument in
  \hyperref[t:operator_projectable]{\cref*{t:operator_projectable}\ref*{i:beta}}, where the
  calculation above establishes \eqref{e:beta_condition} with
  $ \mathcal{A} = \mathfrak{g} $ and $ Z = \mathfrak{g}^{\ast} $.
  Next, we have
  \begin{equation*}
    \bigl\langle F ^{ \prime \prime } ( T y, T y ) , S \bigr\rangle = \omega ( S T y, T y ) = - \omega ( T S T y, y ) = - 2 \bigl\langle F (y) , TST \bigr\rangle ,
  \end{equation*}
  and taking $ T = \alpha \bigl( F (y) \bigr) $ again completes the
  proof.
\end{proof}

\begin{corollary}
  \label{c:quadratic_lie_sydirk}
  Under the hypotheses of \cref{t:quadratic_lie}, an $s$-stage
  SyDIRK method for $ \dot{y} = \alpha \bigl( F (y) \bigr) y $
  descends to the following method for
  $ \dot{z} = - \operatorname{ad} ^\ast _{ \alpha (z) } z
  $:
  \begin{align*}
    Z _i
    &= z _0 - h \sum _{ j = 1 } ^{i-1} b _j \operatorname{ad} ^\ast _{ \alpha  ( Z _j ) } Z _j - \frac{ h }{ 2 } b _i \operatorname{ad} ^\ast _{ \alpha  ( Z _i ) } Z _i + \frac{ h ^2 }{ 4 } b _i ^2 U _{ \alpha ( Z _i ) } ^\ast Z _i  , \\
    z _1
    &= z _0 - h \sum _{ i = 1 } ^s b _i \operatorname{ad} ^\ast _{ \alpha  ( Z _i ) } Z _i .
  \end{align*}
\end{corollary}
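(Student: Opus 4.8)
The plan is to apply \cref{c:sydirk_descends} directly, drawing the requisite vector fields $g$ and $\gamma$ from \cref{t:quadratic_lie}. The map $F \colon Y \to \mathfrak{g}^\ast$ defined by $\langle F(y), T\rangle = \tfrac12\omega(Ty,y)$ is quadratic, so the hypotheses of \cref{c:sydirk_descends} reduce to exhibiting vector fields $g,\gamma$ on $Z = \mathfrak{g}^\ast$ with $F'(y)f(y) = g(F(y))$ and $F''(y)(f(y),f(y)) = \gamma(F(y))$ for $f(y) = \alpha(F(y))y$. These are precisely what \cref{t:quadratic_lie} provides: $g(z) = \beta(\alpha(z)) = -\operatorname{ad}^\ast_{\alpha(z)}z$ unconditionally, and $\gamma(z) = -2U^\ast_{\alpha(z)}z$ under the standing assumption that $\mathfrak{g}$ is closed under $(S,T) \mapsto U_T S = TST$, which is part of the hypotheses being carried over.

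It then remains to substitute these expressions into the descended scheme of \cref{c:sydirk_descends}. Replacing each $g(Z_j)$ by $-\operatorname{ad}^\ast_{\alpha(Z_j)}Z_j$ turns the transport terms into the claimed sums with an overall minus sign, and the correction term becomes $-\tfrac{h^2}{8}b_i^2\gamma(Z_i) = -\tfrac{h^2}{8}b_i^2\bigl(-2U^\ast_{\alpha(Z_i)}Z_i\bigr) = \tfrac{h^2}{4}b_i^2 U^\ast_{\alpha(Z_i)}Z_i$, which is exactly the $\mathcal{O}(h^2)$ term in the stated stage equation; the update equation for $z_1$ follows in the same way.

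There is no substantive obstacle, since the statement is a direct composition of \cref{t:quadratic_lie} and \cref{c:sydirk_descends}. The only points requiring a moment's care are the sign convention $g = -\operatorname{ad}^\ast_{\alpha(z)}z$, which propagates a minus sign through every transport term, and the interaction of the two factors of $2$ — the one built into $\gamma = -2U^\ast_{\alpha(z)}z$ and the one relating the coefficient $-\tfrac18$ of \cref{c:sydirk_descends} to the coefficient $+\tfrac14$ appearing in the final formula.
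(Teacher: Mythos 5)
Your proposal is correct and matches the paper's (implicit) argument exactly: the corollary is a direct composition of \cref{t:quadratic_lie}, which supplies $g(z) = -\operatorname{ad}^\ast_{\alpha(z)}z$ and $\gamma(z) = -2U^\ast_{\alpha(z)}z$, with the descended scheme \eqref{e:sydirk_reduced} of \cref{c:sydirk_descends}. Your sign and factor bookkeeping (the minus sign in every transport term and the cancellation turning $-\tfrac{h^2}{8}b_i^2\gamma$ into $+\tfrac{h^2}{4}b_i^2 U^\ast_{\alpha(Z_i)}Z_i$) is exactly the substitution the paper intends.
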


\begin{remark}
  \label{rem:qla}
  Given any bilinear form $\omega$ on $Y$, we could define
  \begin{equation*}
    \mathfrak{g}  = \bigl\{ T \in \mathfrak{gl} (Y) : \omega ( T x, y ) + \omega ( x, T y ) = 0 ,\ \forall x, y \in Y \bigr\} .
  \end{equation*}
  In the geometric numerical integration literature, these are
  sometimes referred to as \emph{quadratic Lie algebras}
  \citep[Section IV.8.3]{HaLuWa2006}. (Some authors only use this
  terminology in cases where $\omega$ is nondegenerate \citep[Section
  6]{IsMuNoZa2000}.) In this case, it is easily seen that
  $ S, T \in \mathfrak{g} $ implies both $ [ S, T ] \in \mathfrak{g} $
  and $ U _T S \in \mathfrak{g} $.

  Although $\mathfrak{g}$ is generally not a Jordan algebra, the
  quadratic representation $ T \mapsto U _T $ also arises in the
  theory of quadratic Jordan algebras
  \citep{Jacobson1966,McCrimmon1978}. In particular, when
  $\mathfrak{g}$ is defined from $\omega$ as above, then we have an
  associated Jordan operator algebra
  \begin{equation*}
    \mathcal{A}  = \bigl\{ T \in \mathfrak{gl} (Y) : \omega ( T x, y ) - \omega ( x, T y ) = 0 ,\ \forall x, y \in Y \bigr\} ,
  \end{equation*}
  and
  $ U _T S = 2 T \bullet ( T \bullet S ) - ( T \bullet T ) \bullet S =
  TST $, where again $ \bullet $ is the Jordan product.
\end{remark}

\begin{remark}
  Some of the earlier examples can also be viewed from the
  perspective of Lie algebra actions:
  \begin{itemize}
  \item Consider the cotangent-lifted right action of
    $ \mathfrak{g} = \mathfrak{gl} (n) $ on
    $ Y = \mathfrak{gl} (n) \times \mathfrak{gl} (n) ^\ast $, which we
    can write as $ T _M ( q, p ) \coloneqq ( q M , - p M ^T ) $. This
    preserves the canonical symplectic form $\omega$ on $Y$, and the
    corresponding momentum map is precisely $ F (q, p) = q ^T p $, as
    in \cref{ex:lie-poisson,ex:matrix_revisited}, cf.~\citet[Theorem
    12.1.4]{MaRa1999}.  Since $\mathfrak{g}$ is a \emph{right} action,
    note that
    $ [ T _{ M _1 } , T _{ M _2 } ] = - T _{ [ M _1 , M _2 ] } $,
    which explains why the expression $ \operatorname{ad} ^\ast _M $
    on $ Z = \mathbb{R} ^{ n \times n } $ changes signs to become
    $ - \operatorname{ad} _{ T _M } ^\ast $ in \cref{t:quadratic_lie}.

  \item Consider the left action of
    $ \mathfrak{g} = \operatorname{Im} \mathbb{H} $ on
    $ Y = \mathbb{H} $ defined by
    $ T _z y \coloneqq - \frac{1}{2} z y $. This preserves the
    symplectic form
    $ \omega ( x, y ) \coloneqq \frac{1}{2} ( x k y ^\ast - y k x
    ^\ast ) $, and the corresponding momentum map is
    $ F (y) = \frac{ 1 }{ 4 } y k y ^\ast $, as in
    \cref{ex:quaternions}.
    
  \end{itemize}
\end{remark}

\section{Application to matrix hydrodynamics}
\label{s:hydrodynamics}

Euler's equations of incompressible hydrodynamics are Lie--Poisson
equations on the dual of the infinite-dimensional Lie algebra of
divergence-free vector fields, cf.~\citet{ArKh1998} and references
therein. For hydrodynamics in 2D (e.g., on a sphere), an approach
pioneered by \citet{Zeitlin1991,Zeitlin2004} approximates this
infinite-dimensional Lie algebra by a finite-dimensional matrix Lie
algebra $ \mathfrak{su} (n) $, i.e., skew-Hermitian $ n \times n $
matrices with trace zero. Thus, matrix Lie--Poisson integrators on
$ \mathfrak{su} (n) ^\ast $ can be used for structure-preserving
simulation of 2D hydrodynamics. This is a straightforward application
of \cref{ex:lie-poisson} (modulo replacing $\mathbb{R}$ by
$\mathbb{C}$ and transpose by adjoint) that has previously been
considered by \citet{MoVi2020,DaSLes2022}.

In this section, we discuss the application of the results in this
paper to two generalizations of the above. The first is Lie--Poisson
integration for matrix semidirect product Lie algebras, which was
recently considered by \citet{MoRo2025} and applied to a Zeitlin-type
approximation of magnetohydrodynamics (MHD) on
$ \mathfrak{su} (n) \ltimes \mathfrak{su} (n) ^\ast $. The second is
an application to the 2D Navier--Stokes equations, which contain
dissipation terms and are no longer Lie--Poisson (except in the
inviscid case, when they coincide with Euler's equations), and to
non-conservative matrix flows more generally.

\subsection{Semidirect products and magnetohydrodynamics}
\label{s:MHD}

\citet{MoRo2025} describe a model of matrix MHD given by Lie--Poisson
equations on the dual of the semidirect product Lie algebra
$ \mathfrak{su} (n) \ltimes \mathfrak{su} (n) ^\ast $, and they show
that the Lie--Poisson integrators of \citet{ModViv2020} may be applied
to this model. However, the semidirect product Lie algebra does not
satisfy the assumptions of \citep{ModViv2020} to be quadratic and
reductive, so the fact that the integrators remain Lie--Poisson in
this case is not immediate. Instead, \citet{MoRo2025} prove this by
lifting to the cotangent bundle of the semidirect product Lie
\emph{group}.

We now show that SyDIRK methods descend to Lie--Poisson integrators
for matrix semidirect product Lie algebras more generally, using a
version of the argument in \cref{ex:lie-poisson,ex:matrix_revisited}
extended to complex-valued matrices.

Elements
$ q = ( q _1 , q _2 ) \in \mathfrak{gl} (n, \mathbb{C} ) \ltimes
\mathfrak{gl} ( n , \mathbb{C} ) ^\ast $ and
$ p = ( p _1 , p _2 ) \in \bigl( \mathfrak{gl} (n, \mathbb{C} )
\ltimes \mathfrak{gl} ( n , \mathbb{C} ) ^\ast \bigr) ^\ast $ may be
identified with the $ 2 n \times 2 n $ triangular matrices
\begin{equation*}
  q =
  \begin{bmatrix}
    q _1  & q _2 ^\dagger \\
          & q _1 
  \end{bmatrix}, \qquad p =
  \begin{bmatrix}
    p _2 ^\dagger \\
    p _1 & p _2 ^\dagger 
  \end{bmatrix}. 
\end{equation*}
For matrices of this type, the quadratic map
$ F ( q, p ) = q ^\dagger p $ has the form
\begin{equation*}
  z = \begin{bmatrix}
    q _1 ^\dagger \\
    q _2 & q _1 ^\dagger 
  \end{bmatrix}
  \begin{bmatrix}
    p _2 ^\dagger \\
    p _1 & p _2 ^\dagger 
  \end{bmatrix} =
  \begin{bmatrix}
    ( p _2 q _1 ) ^\dagger \\
    q _1 ^\dagger p _1 + q _2 p _2 ^\dagger & ( p _2 q _1 ) ^\dagger 
  \end{bmatrix} \eqqcolon
  \begin{bmatrix}
    \theta ^\dagger \\
    w & \theta ^\dagger 
  \end{bmatrix},
\end{equation*}
following the notation of \citep{MoRo2025} for the components
$ ( w, \theta ) \in \bigl( \mathfrak{gl} (n, \mathbb{C} ) \ltimes
\mathfrak{gl} ( n , \mathbb{C} ) ^\ast \bigr) ^\ast $.

Next, as in \cref{ex:matrix_revisited}, let
$ T _{ M , N } ( q, p ) \coloneqq ( q M , p N ) $, where
\begin{equation*}
  M =
  \begin{bmatrix}
    M _1 & M _2 ^\dagger \\
         & M _1 
  \end{bmatrix}, \qquad N =
  \begin{bmatrix}
    N _2 ^\dagger \\
    N _1 & N _2 ^\dagger 
  \end{bmatrix} .
\end{equation*}
Then, by the same calculation as in that example, we get
\begin{align*}
  \beta ( T _{ M , N } ) z
  &= M ^\dagger z + z N \\
  &=
  \begin{bmatrix}
    (\theta M _1 + N _2 \theta) ^\dagger \\
    M _1 ^\dagger w + M _2 \theta ^\dagger + \theta ^\dagger N _1 + w N _2 ^\dagger & (\theta M _1 + N _2 \theta) ^\dagger 
  \end{bmatrix}\\
  \bigl[ \beta ( T _{ M , N } ) ^2 - \beta ( T _{ M , N } ^2 ) \bigr] z
  &= 2 M ^\dagger z N \\
  &= 2     \begin{bmatrix}
      ( N _2 \theta M _1 ) ^\dagger\\
      M _1 ^\dagger w N _2 ^\dagger + M _1 ^\dagger \theta ^\dagger N _1 + M _2 \theta ^\dagger N _2 ^\dagger & ( N _2 \theta M _1 ) ^\dagger 
    \end{bmatrix} .
\end{align*}
In particular, when $ N = - M ^\dagger $, we get
\begin{align*}
  \beta ( T _{ M , - M ^\dagger } ) z
  &= [ M ^\dagger , z ] \\
  &=
  \begin{bmatrix}
    [ \theta  , M _1 ] ^\dagger \\
    [ M _1 ^\dagger , w ] + [ M _2 , \theta ^\dagger ] & [ \theta  , M _1 ] ^\dagger
  \end{bmatrix} \\
  \bigl[ \beta ( T _{ M , - M ^\dagger } ) ^2 - \beta ( T _{ M , - M ^\dagger } ^2 ) \bigr] z
  &= - 2 M ^\dagger z M ^\dagger \\
  &=
    -2 \mkern-4mu \begin{bmatrix}
         ( M _1 \theta M _1 ) ^\dagger \\
         M _1 ^\dagger w M _1 ^\dagger + M _1 ^\dagger \theta ^\dagger M _2 + M _2 \theta ^\dagger M _1 ^\dagger & ( M _1 \theta M _1 ) ^\dagger
       \end{bmatrix}\mkern-4mu .
\end{align*}
Now, as in \citet{MoRo2025}, let us suppose that
$ \alpha (z) = T _{ M (z) , - M (z) ^\dagger } $, where
\begin{equation*}
  M (z) =
  \begin{bmatrix}
    M _1 (w) & M _2 (\theta) ^\dagger \\
             & M _1 (w) 
  \end{bmatrix} .
\end{equation*}
Then it follows from \cref{t:operator_projectable} that
\begin{align*}
  g ( w, \theta ) &= \Bigl( \bigl[ M _1 (w) ^\dagger , w \bigr] + \bigl[  M _2 (\theta) , \theta ^\dagger \bigr] , \bigl[  \theta  , M _1 (w) \bigr]  \Bigr)\\
  \gamma ( w, \theta ) &= - 2 \Bigl(
                         \begin{aligned}[t]
                           &M _1 (w) ^\dagger w M _1 (w) ^\dagger + M _1 (w) ^\dagger \theta ^\dagger M _2 (\theta) + M _2 (\theta) \theta ^\dagger M _1 (w) ^\dagger ,\\
                           &M _1 (w) \theta M _1 (w)  \Bigr) .
                         \end{aligned}
\end{align*}
Hence, by \cref{c:sydirk_alpha_beta}, a SyDIRK method in
$ y = ( q, p ) $ descends to the method
\begin{alignat*}{2}
  W _i &= w _0
  &&+ h \sum _{ j = 1 } ^{ i -1 } b _j \Bigl( \bigl[ M _1 ( W _j ) ^\dagger , W _j \bigr] + \bigl[ M _2 ( \Theta _j ) , \Theta _j ^\dagger \bigr] \Bigr) \\
               &&&+ \frac{ h }{ 2 } b _i \Bigl( \bigl[ M _1 ( W _i ) ^\dagger , W _i \bigr] + \bigl[ M _2 ( \Theta _i ) , \Theta _i ^\dagger \bigr] \Bigr) \\
               &&&+ \frac{ h ^2 }{ 4 } b _i ^2 \Bigl( M _1 (W _i) ^\dagger W _i M _1 (W _i) ^\dagger
                   \begin{aligned}[t]
                     &+ M _1 (W _i) ^\dagger \Theta _i ^\dagger M _2 (\Theta _i) \\
                     &+ M _2 (\Theta _i) \Theta _i ^\dagger M _1 (W _i) ^\dagger \Bigr) ,
                   \end{aligned}
  \\
  \Theta _i &= \theta _0
  &&+ h \sum _{ j = 1 } ^{ i -1 } b _j \bigl[ \Theta _j , M _1 ( W _j ) \bigr]  + \frac{ h }{ 2 } b _i \bigl[ \Theta _i , M _1 ( W _i ) \bigr] + \frac{ h ^2 }{ 4 } b _i ^2 M _1 ( W _i ) \Theta _i M _1 ( W _i ) ,\\
  w _1 &= w _0 &&+ h \sum _{ i = 1 } ^s b _i \Bigl( \bigl[ M _1 ( W _i ) ^\dagger , W _i \bigr] + \bigl[ M _2 ( \Theta _i ) , \Theta _i ^\dagger \bigr] \Bigr) , \\
  \theta _1 &= \theta _0 &&+ h \sum _{ i = 1 } ^s b _i \bigl[ \Theta _i , M _1 ( W _i ) \bigr] .
\end{alignat*}
Since $F$ is the momentum map of the cotangent-lifted action
$ M \mapsto T _{ M , - M ^\dagger } $, the fact that the SyDIRK method
is symplectic implies that the descendent method is Lie--Poisson
\citep[Theorem 12.4.1]{MaRa1999}.

In the special case when all the $ n \times n $ matrix blocks are
skew-Hermitian, as they are for the subalgebra
$ \mathfrak{su} (n) \ltimes \mathfrak{su} (n) ^\ast $, the system
$\dot{z} = g (z) $ agrees with \citep[Equation
3.8]{MoRo2025}. Furthermore, the $ s = 1 $ case of the method above
(i.e., the descendent of the implicit midpoint method) coincides with
the method of \citep[Equations 4.6 and 4.15]{MoRo2025}.

\begin{remark}
  In the calculation above, we have reduced along a momentum map
  \begin{equation*}
    \bigl( \mathfrak{gl}(n, \mathbb{C} ) \ltimes \mathfrak{gl} (n, \mathbb{C}) ^\ast \bigr)  \times \bigl(  \mathfrak{gl}(n, \mathbb{C} ) \ltimes \mathfrak{gl} (n, \mathbb{C}) ^\ast \bigr)  ^\ast \rightarrow \bigl(  \mathfrak{gl}(n, \mathbb{C} ) \ltimes \mathfrak{gl} (n, \mathbb{C}) ^\ast \bigr)  ^\ast.
  \end{equation*}
  \citet{MoRo2025} reduce along a different momentum map,
  \begin{align*}
    T ^\ast \bigl( \mathrm{SU} (n) \ltimes \mathfrak{su} (n) ^\ast \bigr) &\rightarrow \bigl(  \mathfrak{su}(n) \ltimes \mathfrak{su} (n) ^\ast \bigr)  ^\ast \\
    ( Q, m, P , \alpha ) &\mapsto \biggl( \frac{ P Q ^\dagger - Q P ^\dagger }{ 2 } , Q \alpha Q ^\dagger \biggr) \eqqcolon ( w ^\dagger , \theta ),
  \end{align*}
  corresponding to a cotangent-lifted left Lie group action. Since $m$
  is irrelevant and $\alpha$ remains constant, we may restrict
  attention to $ ( Q, P ) \in T ^\ast \mathrm{SU} (n) $, where
  $\alpha$ is treated as a parameter of the system \citep[Remarks
  7 and 8]{MoRo2025}.

  It is now possible to apply \cref{t:operator_projectable} and
  \cref{c:sydirk_alpha_beta} in this alternative setting,
  where we embed $ T ^\ast \mathrm{SU} (n) $ into
  $ Y \coloneqq \mathbb{C} ^{ n \times n } \times \mathbb{C} ^{ n
    \times n } $. If we define
  \begin{equation*}
    T _M ( Q, P ) \coloneqq ( M _1 ^\dagger Q , M _1 ^\dagger P + 2 M _2 ^\dagger Q \alpha ^\dagger ) ,
  \end{equation*}
  following \citep[Equation 4.9]{MoRo2025}, a calculation shows that
  \begin{align*}
    \beta ( T _M ) ( w, \theta ) &= ( w M _1 + M _1 ^\dagger w + \theta M _2 - M _2 ^\dagger \theta ^\dagger , M _1 ^\dagger \theta + \theta M _1 ) ,\\
    \bigl[ \beta ( T _M ) ^2 - \beta ( T _M ^2 ) \bigr] ( w, \theta ) &= 2 ( M _1 ^\dagger w M _1 + M _1 ^\dagger \theta M _2 - M _2 ^\dagger \theta ^\dagger M _1 , M _1 ^\dagger \theta M _1 ) .
  \end{align*}
  When all of these matrices are skew-Hermitian, this becomes
  \begin{align*}
    \beta ( T _M ) ( w, \theta ) &= \bigl( [M _1 ^\dagger, w] + [ M _2 , \theta ^\dagger ] , [\theta, M _1] \bigr) ,\\
    \bigl[ \beta ( T _M ) ^2 - \beta ( T _M ^2 ) \bigr] ( w, \theta ) &= -2 ( M _1 ^\dagger w M _1 ^\dagger + M _1 ^\dagger \theta ^\dagger M _2 + M _2  \theta ^\dagger M _1 ^\dagger , M _1 \theta M _1 )  ,
  \end{align*}
  matching the previous expressions and again recovering
  \citep[Equations 4.6 and 4.15]{MoRo2025}.
\end{remark}

\subsection{General matrix flows and 2D Navier--Stokes}

Suppose we wish to integrate an arbitrary matrix flow
$ \dot{z} = g (z) $ on $ Z = \mathbb{C} ^{ n \times n } $, and that we
would like to do so in such a way that agrees with the
previously-discussed Lie--Poisson integrators when
$ g (z) = \bigl[ M (z) ^\dagger , z ] $. In the generic case where $z$
has distinct nonzero eigenvalues $ \lambda _1 , \ldots, \lambda _n $,
we decompose $ g (z) = M (z) ^\dagger z + z N (z) $ as follows.

We first claim that we have a direct sum decomposition
\begin{equation*}
  Z = \mathcal{R} _z \oplus \mathcal{N} _z , 
\end{equation*}
where $ \mathcal{R} _z $ is the range of $ L \mapsto [ L , z ] $ and
$\mathcal{N} _z$ is its nullspace. By the rank-nullity theorem, it
suffices to show that
$ \mathcal{R} _z \cap \mathcal{N} _z = \{ 0 \} $. Note that
$\mathcal{N} _z $ consists of matrices commuting with $z$, i.e., those
that are simultaneously diagonalizable. With respect to the eigenbasis
of $z$, we have
$ [ L , z ] _{ i j } = ( \lambda _j - \lambda _i ) L _{ i j } $, which
clearly vanishes on the diagonal $ i = j $. Thus, $ [ L , z ] $ can
only be diagonal in this basis if it is identically zero, which proves
the claim.

Since $z$ is invertible, multiplication by $z$ is an invertible
operator on $ \mathcal{N} _z $. It follows that any matrix can be
uniquely written as $ [ L , z ] + P z $ for some
$ L \in \mathcal{R} _z $, $ P \in \mathcal{N} _z $.

\begin{remark}
  In the eigenbasis of $z$, this decomposition is simply the splitting
  of a matrix into its off-diagonal and diagonal parts. Explicitly,
  for a matrix $K$ represented in this basis, we have
  \begin{equation}
    L _{ i j } =
    \begin{cases}
      0 , & i = j ,\\
      K _{ i j } / (\lambda _j - \lambda _i ) , & i \neq j ,
    \end{cases}
    \qquad P _{ i i } =
    \begin{cases}
      K _{ i i } / \lambda _i , & i = j ,\\
      0, & i \neq j . \label{e:LP_matrices}
    \end{cases}
  \end{equation}
\end{remark}

Using the decomposition above, we can therefore write
\begin{equation}
  \label{e:LP_splitting}
  g (z) = [ L , z ] + P z .
\end{equation}
This puts us in the situation of \cref{ex:matrix_revisited},
where we have $F$-related vector fields
\begin{equation*}
  f ( q, p ) = \bigl( q M ( q ^\dagger p ) , p N ( q ^\dagger p ) \bigr) , \qquad g (z) = M (z) ^\dagger z + z N (z) ,
\end{equation*}
with $ F ( q, p ) = q ^\dagger p $. To match \eqref{e:LP_splitting}, we take
\begin{equation}
  \label{e:MNLP}
  M (z) ^\dagger = L + \tfrac{1}{2} P , \qquad N (z) = - L + \tfrac{1}{2} P .
\end{equation}
Therefore, \cref{c:sydirk_alpha_beta} implies that a
SyDIRK method for $ \dot{y} = f ( q, p ) $ descends to the following
method for $ \dot{z} = g (z) $:
\begin{subequations}
  \label{e:sydirk_matrix}
  \noeqref{e:sydirk_matrix_stages,e:sydirk_matrix_step}
  \begin{align}
    Z _i
    &= z _0
      + h \sum _{ j = 1 } ^{ i -1 } b _j g ( Z _j ) + \frac{ h }{ 2 } b _i g ( Z _i )  - \frac{ h ^2 }{ 4 } b _i ^2 M ( Z _i ) ^\dagger Z _i N ( Z _i ) , \label{e:sydirk_matrix_stages} \\
    z _1 &= z _0 + h \sum _{ i = 1 } ^s b _i g ( Z _i ) . \label{e:sydirk_matrix_step}
  \end{align}
\end{subequations}
\begin{remark}
  As with the matrix Lie--Poisson methods discussed in
  \cref{ex:lie-poisson}, we can obtain an alternative
  expression in terms of the additional stages
  \begin{equation*}
    Z ^r _i \coloneqq z _0 + h \sum _{ j = 1 } ^i b _i g ( Z _j ) ,
  \end{equation*}
  where $ Z ^r _0 = z _0 $ and $ Z ^r _s = z _1 $. The algorithm
  \eqref{e:sydirk_matrix} may then be written as
  \begin{align*}
    Z ^r _{ i -1 }
    &= \biggl( I - \frac{ h }{ 2 } b _i M ( Z _i ) ^\dagger \biggr) Z _i \biggl( I - \frac{ h }{ 2 } b _i N ( Z _i ) \biggr), \\
    Z ^r _i
    &= \biggl( I + \frac{ h }{ 2 } b _i M ( Z _i ) ^\dagger \biggr) Z _i \biggl( I + \frac{ h }{ 2 } b _i N ( Z _i ) \biggr),
  \end{align*}
  in a similar style to the matrix Lie--Poisson methods of
  \citet{DaSLes2022}.
\end{remark}

To apply this approach to matrix hydrodynamics with dissipation on
$ \mathfrak{su}(n) $, we first consider general matrix flows on
$ \mathfrak{u} (n) $, the real Lie algebra of skew-Hermitian
$ n \times n $ matrices (with not-necessarily-vanishing trace). When
$z \in \mathfrak{u}(n)$, we observe from \eqref{e:LP_matrices} that
the decomposition $ g (z) = [ L , z ] + P z \in \mathfrak{u}(n) $ has
$L$ skew-Hermitian and $P$ Hermitian.  (In fact, since the eigenbasis
of $z$ is unitary, this is an orthogonal decomposition with respect to
the Frobenius inner product, and we recover the ``canonical
splitting'' of \citet{MoVi2020,MoVi2026}.) It then follows from
\eqref{e:MNLP} that $ M = N $, and therefore
$ \gamma (z) = 2 M (z) ^\dagger z M (z) $ is skew-Hermitian as
well. Hence, the method \eqref{e:sydirk_matrix} remains entirely in
$ \mathfrak{u} (n) $. Furthermore, if
$ g \colon \mathfrak{u} (n) \rightarrow \mathfrak{su}(n) $, as in
matrix hydrodynamics, then $ z _0 \in \mathfrak{su}(n) $ implies
$ z _1 \in \mathfrak{su}(n) $, although the internal stages $ Z _i $
may only be in $ \mathfrak{u}(n) $.

\begin{proposition}
  \label{p:frobenius}
  The Frobenius norm of a numerical solution to
  \eqref{e:sydirk_matrix} on $ \mathfrak{u}(n) $ satisfies
  \begin{equation*}
    \frac{1}{2} \lVert z _1 \rVert ^2 = \frac{1}{2} \lVert z _0 \rVert ^2 - h \sum _{ i = 1 } ^s b _i \operatorname{tr} \Bigl( Z _i P ( Z _i ) Z _i \Bigr) - \frac{ h ^3 }{ 4 } \sum _{ i = 1 } ^s b _i ^3 \operatorname{tr} \Bigl( M ( Z _i ) ^\dagger Z _i P ( Z _i ) Z _i M ( Z _i ) \Bigr) .
  \end{equation*}
  In particular, when $ g (z) = \bigl[ M (z) ^\dagger , z \bigr] $, so
  that $ P = 0 $, the Frobenius norm is conserved.
\end{proposition}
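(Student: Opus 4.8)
The plan is to apply \cref{t:quadratic_reduced} to the quadratic observable $ G (z) = \tfrac{1}{2} \lVert z \rVert ^2 $. On $ \mathfrak{u} (n) $ we have $ z ^\dagger = - z $, so $ G (z) = - \tfrac{1}{2} \operatorname{tr} ( z ^2 ) $ is a genuine quadratic form, with $ G ^\prime (z) v = - \operatorname{tr} ( z v ) $ and $ G ^{ \prime \prime } ( a, b ) = - \operatorname{tr} ( a b ) $. Since \eqref{e:sydirk_matrix} is the descendent of a SyDIRK method in the sense of \cref{c:sydirk_descends}, with $ g $ and $ \gamma $ as in \cref{ex:matrix_revisited} (so that $ \gamma (z) = 2 M (z) ^\dagger z N (z) $), \cref{t:quadratic_reduced} immediately yields
\begin{equation*}
  \tfrac{1}{2} \lVert z _1 \rVert ^2 = \tfrac{1}{2} \lVert z _0 \rVert ^2 - h \sum _{ i = 1 } ^s b _i \operatorname{tr} \bigl( Z _i g ( Z _i ) \bigr) - \frac{ h ^3 }{ 8 } \sum _{ i = 1 } ^s b _i ^3 \operatorname{tr} \bigl( g ( Z _i ) \gamma ( Z _i ) \bigr) .
\end{equation*}
It then remains to rewrite the two trace terms using the splitting \eqref{e:LP_splitting}--\eqref{e:MNLP} on $ \mathfrak{u} (n) $, where $ M = N $, $ M ^\dagger = L + \tfrac{1}{2} P $, $ M + M ^\dagger = P $, and $ M ^\dagger - M = 2 L $.

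For the $ \mathcal{O} (h) $ term, I would write $ g (z) = [ L , z ] + P z $, so that $ \operatorname{tr} \bigl( z g (z) \bigr) = \operatorname{tr} \bigl( z [ L , z ] \bigr) + \operatorname{tr} ( z P z ) $; cyclic invariance of the trace gives $ \operatorname{tr} \bigl( z [ L , z ] \bigr) = \operatorname{tr} ( z L z ) - \operatorname{tr} ( z ^2 L ) = 0 $, leaving exactly $ \operatorname{tr} \bigl( Z _i P ( Z _i ) Z _i \bigr) $, the first sum in the statement.

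For the $ \mathcal{O} (h ^3 ) $ term, I would use $ \gamma (z) = 2 M ^\dagger z M $ and $ g (z) = M ^\dagger z + z M $ to obtain $ \operatorname{tr} ( g \gamma ) = 2 \operatorname{tr} ( M ^\dagger z M ^\dagger z M ) + 2 \operatorname{tr} ( z M M ^\dagger z M ) $, while substituting $ P = M ^\dagger + M $ into the target gives $ \operatorname{tr} ( M ^\dagger z P z M ) = \operatorname{tr} ( M ^\dagger z M ^\dagger z M ) + \operatorname{tr} ( M ^\dagger z M z M ) $. Thus it suffices to show $ \operatorname{tr} ( z M M ^\dagger z M ) = \operatorname{tr} ( M ^\dagger z M z M ) $. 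Substituting $ M ^\dagger = M + 2 L $ in both sides reduces this to checking that the ``$ M $-only'' words $ z M M z M $ and $ M z M z M $ lie in the same cyclic class, as do the ``$ L $-words'' $ z M L z M $ and $ L z M z M $; cyclic invariance of the trace then gives the equality. Hence $ \operatorname{tr} ( g \gamma ) = 2 \operatorname{tr} ( M ^\dagger z P z M ) $, producing the term $ - \tfrac{ h ^3 }{ 4 } \sum _i b _i ^3 \operatorname{tr} \bigl( M ( Z _i ) ^\dagger Z _i P ( Z _i ) Z _i M ( Z _i ) \bigr) $. Finally, when $ g (z) = [ M (z) ^\dagger , z ] $ we have $ P = 0 $, so both correction terms vanish and $ \lVert z _1 \rVert = \lVert z _0 \rVert $.

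The only genuine work is the cancellation in the $ \mathcal{O} (h ^3 ) $ term; once one notices that, after the substitution $ M ^\dagger = M + 2 L $, each of the relevant pairs of length-five words differs by a cyclic rotation, the identity is immediate, and everything else is a direct combination of \cref{t:quadratic_reduced}, the structure of \eqref{e:sydirk_matrix} on $ \mathfrak{u} (n) $, and cyclicity of the trace.
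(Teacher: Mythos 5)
Your proof is correct and takes essentially the same route as the paper: apply \cref{t:quadratic_reduced} to $G(z) = \tfrac{1}{2}\lVert z \rVert^2$ and simplify the two trace terms using $M^\dagger + M = P$ (with $M = N$ on $\mathfrak{u}(n)$, $\gamma(z) = 2M(z)^\dagger z M(z)$) and cyclicity of the trace. The only cosmetic difference is that your substitution $M^\dagger = M + 2L$ in the $\mathcal{O}(h^3)$ term is unnecessary, since $\operatorname{tr}\bigl(zM M^\dagger z M\bigr) = \operatorname{tr}\bigl(M^\dagger z M z M\bigr)$ already follows from a single cyclic rotation, which is how the paper argues.
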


\begin{proof}
  We apply \cref{t:quadratic_reduced} with
  $ G (z) = \frac{1}{2} \lVert z \rVert ^2 $. First, observe that
  \begin{align*}
    G ^\prime (z) g (z)
    &= \bigl\langle z, M (z) ^\dagger z + z M (z) \bigr\rangle \\
    &= - \operatorname{tr} \Bigl( z M (z) ^\dagger z + z M (z) z \Bigr) \\
    &= - \operatorname{tr} \Bigl( z P (z) z \Bigr),
  \end{align*}
  since $ M ^\dagger + M = P $. Similarly,
  \begin{align*}
    G ^{ \prime \prime } \bigl( g (z) , \gamma (z) \bigr)
    &= 2 \bigl\langle M  (z) ^\dagger z + z M (z) , M (z) ^\dagger z M (z) \bigr\rangle \\
    &= - 2 \operatorname{tr} \Bigl( M (z) ^\dagger z M (z) ^\dagger z M (z) + M (z) ^\dagger z M (z) z M (z) \Bigr) \\
    &= - 2 \operatorname{tr} \Bigl( M (z) ^\dagger z P (z) z M (z) \Bigr).
  \end{align*}
  Hence, the result follows by \cref{t:quadratic_reduced}.
\end{proof}

\begin{example}[2D Navier--Stokes]
  Let us consider the 2D Navier--Stokes equations in the vorticity formulation
  \begin{equation*}
    \dot{\omega}=\lbrace\Delta^{-1}\omega,\omega\rbrace + \nu \Delta \omega.
  \end{equation*}
  Following \citet{Zeitlin2004}, these equations admit a spatial
  semidiscretization in $ \mathfrak{su}(n) $,
  \begin{equation}
    \label{e:zeitlin_ns}
    \dot{w} = [ \Delta _n ^{-1} w , w ] + \nu \Delta _n w \eqqcolon g (w) .
  \end{equation}
  Here, $ \Delta _n $ is a discrete approximation to the Laplacian
  (e.g., that of \citet{HoYa1998} on the sphere), which is a
  self-adjoint operator on $ \mathfrak{u}(n) $ but not simply an
  $ n \times n $ matrix. The nullspace of $ \Delta _n $ consists of
  imaginary multiples of the identity matrix, so it is invertible on
  $ \mathfrak{su}(n) $. To make sense of ``$ \Delta _n ^{-1} w $''
  for $ w \in \mathfrak{u}(n) $ more generally---as we will need to do
  at the internal stages---we can do one of two equivalent things:
  \begin{itemize}
  \item Take the Moore--Penrose pseudoinverse, i.e.,
    $ \Delta _n ^{-1} \bigl( w - \frac1n ( \operatorname{tr} w ) I \bigr) $.

  \item View $ \Delta _n $ as an invertible operator on the quotient
    $ \mathfrak{pu}(n) = \mathfrak{u}(n)/i \mathbb{R} I $, as in
    \citet[Section 3]{MoVi2026}, and apply $ \Delta _n ^{-1} $ to the
    equivalence class of $w$. The representative of this equivalence
    class in $ \mathfrak{su}(n) $ is exactly
    $ w - \frac1n ( \operatorname{tr} w ) I $, as above.
  \end{itemize} 
  Since the Lie bracket annihilates multiples of the identity, the
  term $ [ \Delta _n ^{-1} w, w ] \in \mathfrak{su}(n) $ is thus
  well-defined for $ w \in \mathfrak{u}(n) $, and we have
  $ g \colon \mathfrak{u}(n) \rightarrow \mathfrak{su}(n) $ as needed.

  It then follows from the preceding discussion that we can apply the
  method \eqref{e:sydirk_matrix} to the system \eqref{e:zeitlin_ns},
  where $L$ is skew-Hermitian, $P$ is Hermitian, and $ M = N $, and
  this method is Lie--Poisson in the inviscid case $ \nu = 0 $.
  \cref{t:quadratic_reduced} and \cref{p:frobenius}
  then give the following result on dissipation of \emph{energy}
  $ -\frac{1}{2} \langle \Delta _n ^{-1} w , w \rangle $ and
  \emph{enstrophy} $ \frac{1}{2} \lVert w \rVert ^2 $.

  \begin{corollary}
    For the method \eqref{e:sydirk_matrix} applied to the
    Navier--Stokes--Zeitlin system \eqref{e:zeitlin_ns}, energy and
    enstrophy are dissipated monotonically up to
    $ \mathcal{O} ( h ^3 ) $ when $ b _i \geq 0 $ for all $i$, and
    enstrophy is conserved exactly in the inviscid case $ \nu = 0 $.
  \end{corollary}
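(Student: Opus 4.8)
The plan is to apply \cref{p:frobenius} to enstrophy and \cref{t:quadratic_reduced} to energy, reducing each claim to the sign of a single $\mathcal{O}(h)$ term; the remaining contributions are either collected into the stated $\mathcal{O}(h^3)$ remainder or vanish identically. Throughout, recall from the discussion preceding the corollary that the splitting \eqref{e:LP_splitting} applied to $g(w) = [\Delta_n^{-1}w, w] + \nu\Delta_n w$ has $L$ skew-Hermitian, $P$ Hermitian, and $M = N = L + \tfrac12 P$, so that the descendent \eqref{e:sydirk_matrix} stays in $\mathfrak{u}(n)$ with $z_1 \in \mathfrak{su}(n)$ whenever $z_0 \in \mathfrak{su}(n)$.

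For enstrophy $\tfrac12\lVert w\rVert^2$, \cref{p:frobenius} already gives
\[
  \tfrac12\lVert z_1\rVert^2 = \tfrac12\lVert z_0\rVert^2 - h\sum_i b_i\operatorname{tr}\bigl(Z_i P(Z_i) Z_i\bigr) - \tfrac{h^3}{4}\sum_i b_i^3\operatorname{tr}\bigl(M(Z_i)^\dagger Z_i P(Z_i) Z_i M(Z_i)\bigr),
\]
so it will suffice to show $\operatorname{tr}\bigl(Z_i P(Z_i)Z_i\bigr)\ge 0$. By construction $P(z)z$ is the diagonal part of $g(z)$ in the eigenbasis of $z$; since $[\Delta_n^{-1}z,z]$ is purely off-diagonal in that basis, $P(z)z$ is the diagonal part of $\nu\Delta_n z$, and a short computation using \eqref{e:LP_matrices} will give $\operatorname{tr}\bigl(zP(z)z\bigr) = \nu\operatorname{tr}\bigl((\Delta_n z)z\bigr) = -\nu\langle\Delta_n z, z\rangle$. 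This is $\ge 0$ because $\nu\ge 0$ and $\Delta_n$ is negative semidefinite (being a discretization of the Laplacian). Hence $b_i\ge 0$ yields $\tfrac12\lVert z_1\rVert^2 \le \tfrac12\lVert z_0\rVert^2 + \mathcal{O}(h^3)$, i.e., monotone dissipation up to $\mathcal{O}(h^3)$. When $\nu = 0$, we have $g(z) = [\Delta_n^{-1}z, z]$, so $P\equiv 0$ and every correction term above vanishes, giving exact conservation of enstrophy.

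For energy $-\tfrac12\langle\Delta_n^{-1}w, w\rangle$, I would apply \cref{t:quadratic_reduced} with the quadratic observable $G(z) = -\tfrac12\langle\Delta_n^{-1}z,z\rangle$, interpreting $\Delta_n^{-1}$ as the pseudoinverse (equivalently, inversion on $\mathfrak{pu}(n)$) so that $G$ is defined at the internal stages $Z_i\in\mathfrak{u}(n)$. Using self-adjointness, $G^\prime(z)g(z) = -\langle\Delta_n^{-1}z, [\Delta_n^{-1}z, z]\rangle - \nu\langle\Delta_n^{-1}z, \Delta_n z\rangle$. The first term vanishes by cyclicity, $\operatorname{tr}\bigl(A[A,C]\bigr) = 0$ with $A = \Delta_n^{-1}z$, which is the discrete analogue of energy conservation along the Euler part of the flow. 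The second term equals $-\nu\lVert z\rVert^2$ on $\mathfrak{su}(n)$, and on $\mathfrak{u}(n)$ it equals $-\nu(\lVert z\rVert^2 - \tfrac1n\lvert\operatorname{tr}z\rvert^2)\le 0$. Thus the $\mathcal{O}(h)$ term in \cref{t:quadratic_reduced} is $\le 0$ when $b_i\ge 0$, and energy decreases up to the $\mathcal{O}(h^3)$ correction.

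The main obstacle is the genuinely new computation $\operatorname{tr}\bigl(zP(z)z\bigr) = -\nu\langle\Delta_n z, z\rangle$ via the eigenbasis splitting \eqref{e:LP_matrices}; once that identity is in hand the signs follow from negative-semidefiniteness of $\Delta_n$ and the trace identity $\operatorname{tr}(A[A,C]) = 0$. The one further point to handle with care is that the stages $Z_i$ lie a priori only in $\mathfrak{u}(n)$, so $\Delta_n^{-1}$ must be read via the pseudoinverse/quotient; the resulting trace-correction terms only subtract a nonnegative quantity from the dissipation rate, so they do not affect the qualitative conclusion. Everything else is direct substitution into \cref{p:frobenius} and \cref{t:quadratic_reduced}.
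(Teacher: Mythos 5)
Your proposal is correct and follows essentially the same route as the paper: apply \cref{t:quadratic_reduced} and check the sign of the $\mathcal{O}(h)$ term, with the energy computation ($-\nu\lVert z\rVert^2 + \tfrac{\nu}{n}\lvert\operatorname{tr}z\rvert^2\le 0$ via the pseudoinverse/projection) matching the paper's verbatim, and the inviscid enstrophy conservation obtained from \cref{p:frobenius} with $P=0$ exactly as in the paper. The only cosmetic difference is in the viscous enstrophy step: you go through \cref{p:frobenius} and the eigenbasis identity $\operatorname{tr}\bigl(zP(z)z\bigr)=-\nu\langle\Delta_n z,z\rangle$, whereas the paper computes the same quantity directly as $G^\prime(w)g(w)=\langle w,[\Delta_n^{-1}w,w]+\nu\Delta_n w\rangle=\nu\langle w,\Delta_n w\rangle\le 0$; both reduce to negative-semidefiniteness of $\Delta_n$, so the arguments are equivalent.
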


  \begin{proof}
    Calculating $ G ^\prime (w) g (w) $ when $G$ is energy, we get
    \begin{align*}
      - \bigl\langle \Delta ^{-1} _n w , [ \Delta _n ^{-1} w , w ] + \nu \Delta _n w \bigr\rangle
      &= -\nu \langle \Delta ^{-1} _n w, \Delta _n w \rangle \\
      &= -\nu \langle \Delta _n \Delta _n ^{-1} w , w \rangle \\
      &= -\nu \bigl\lVert w - \tfrac1n (\operatorname{tr} w ) I \bigr\rVert ^2 \\
      &\leq 0 ,
    \end{align*}
    where the last equality uses the fact that the composition of
    $ \Delta _n $ with its pseudoinverse is the orthogonal projection
    onto $ \mathfrak{su}(n) $. For enstrophy, we get
    \begin{equation*}
      \bigl\langle w, [ \Delta _n ^{-1} w, w ] + \nu \Delta _n w \bigr\rangle = \nu \langle w, \Delta _n w \rangle \leq 0 ,
    \end{equation*}
    since $ \Delta _n $ is negative-semidefinite. Hence, dissipation
    of both quantities up to $ \mathcal{O} ( h ^3 ) $ follows from
    \cref{t:quadratic_reduced}. In the inviscid case
    $ \nu = 0 $, we have $ P = 0 $, so exact numerical conservation of
    enstrophy follows from \cref{p:frobenius}.
  \end{proof}
\end{example}

\section*{Acknowledgments}

The authors wish to thank Klas Modin for helpful conversations and
generous feedback during the early stages of this project. We also
would like to thank the Isaac Newton Institute for Mathematical
Sciences for support and hospitality during the program ``Geometry,
compatibility and structure preservation in computational differential
equations,'' supported by EPSRC grant number EP/R014604/1, as this
paper grew out of conversations begun there. Ari Stern was supported
by the National Science Foundation under Grant No.~DMS-2208551.

\end{document}